\documentclass[12pt,reqno]{amsart}
\usepackage{amssymb}
\usepackage{amscd}


\setlength{\unitlength}{0.8cm}
\setlength{\baselineskip}{18pt}
\setlength{\parindent}{0cm}
\setlength{\parskip}{6pt}
\setlength{\textwidth}{16cm}
\setlength{\textheight}{21cm}
\setlength{\oddsidemargin}{0.1in}
\setlength{\evensidemargin}{0.1in}
\setlength{\headheight}{30pt} 
\setlength{\headsep}{40pt}
\setlength{\topmargin}{-60pt}
\setlength{\marginparwidth}{0pt}
\newcommand{\nc}{\newcommand}
\nc{\cal}{\mathcal} 

\newtheorem{theorem}{Theorem}[section]

\newtheorem*{theorem*}{Theorem}
\theoremstyle{definition}
\newtheorem{definition}[theorem]{Definition}

\newtheorem{proposition}{Proposition}[section]
\theoremstyle{remark}

\newtheorem{cor}[theorem]{\bf Corollary}

\newtheorem*{claim*}{\bf Claim}
\numberwithin{equation}{subsection}

\newtheorem*{conjectures*}{\rm {\bf Conjectures}}




\renewcommand{\rho}{\varrho}
\renewcommand{\phi}{\varphi}
\renewcommand{\epsilon}{\varepsilon}

\hyphenation{trans-cendental}

\renewcommand{\rho}{\varrho}
\renewcommand{\phi}{\varphi}
\newcommand{\End}{\operatorname{End}}

\newcommand{\supp}{\operatorname{supp}}

\newcommand{\SL}{\operatorname{SL}}

\newcommand{\meas}{\operatorname{\meas}}

\nc{\la}{\langle} \nc{\ra}{\rangle}
\nc{\CA}{\cal A}
\nc{\CBB}{\cal B}
\nc{\CDD}{\cal D}
\nc{\CE}{\cal E}
\nc{\CF}{\cal F} \nc{\CG}{\cal
	G} \nc{\CH}{\cal H} \nc{\CI}{\cal I} \nc{\CJ}{\cal J}
\nc{\CK}{\cal K} \nc{\CL}{\cal L} \nc{\CM}{\cal M} \nc{\CN}{\cal
	N} \nc{\CO}{\cal O} \nc{\CP}{\cal P} \nc{\CQ}{\cal Q}
\nc{\CR}{\cal R} \nc{\CS}{\cal S} \nc{\CT}{\cal T} \nc{\CU}{\cal
	U} \nc{\CV}{\cal V} \nc{\CW}{\cal W} \nc{\CZ}{\cal Z}

\nc{\Ck}{\textsl{k}}


\nc{\fg}{\mathfrak g} \nc{\fii}{\mathfrak i}\nc{\fk}{\mathfrak k}
\nc{\fh}{\mathfrak h} \nc{\fm}{\mathfrak m} \nc{\fn}{\mathfrak n}
\nc{\fA}{\mathfrak A} \nc{\fC}{\mathfrak C} \nc{\fI}{\mathfrak I}
\nc{\fL}{\mathfrak L} \nc{\fS}{\mathfrak S}
\nc{\fz}{\mathfrak z} \nc{\fl}{\mathfrak l}
\nc{\fp}{\mathfrak p}  \nc{\fU}{\mathfrak U}
\nc{\ft}{\mathfrak t}

\nc{\ba}{\mathbb A}
\nc{\bq}{\mathbb Q}
\nc{\br}{\mathbb R}
\nc{\bz}{\mathbb Z}
\nc{\bc}{\mathbb C}
\nc{\bn}{\mathbb N}
\nc{\bg}{\mathbb G}
\nc{\ck}{\mathcal{K}}
\nc{\G}{\Gamma}
\nc{\sm}{\setminus}
\nc{\sub}{\subset}
\nc{\lm}{\lambda}
\nc{\Lm}{\Lambda}
\nc{\al}{\alpha}
\nc{\bt}{\beta}
\nc{\om}{\omega}
\nc{\dl}{\delta}
\nc{\g}{\gamma}
\nc{\Dl}{\Delta}
\nc{\Om}{\Omega}
\nc{\s}{\sigma}
\nc{\ro}{\rho}
\nc{\te}{\theta}
\nc{\SLR}{\operatorname{SL}_2(\br)}
\nc{\GLR}{\operatorname{GL}_2(\br)}
\nc{\PGLR}{\operatorname{PGL}_2(\br)}
\nc{\PSLR}{\operatorname{PSL}_2(\br)}
\nc{\PSLZ}{\operatorname{PSL}_2(\bz)}
\nc{\PGLZ}{\operatorname{PGL}_2(\bz)}
\nc{\SLC}{\operatorname{SL}_2(\bc)}
\nc{\uH}{\mathbb H}
\nc{\fD}{\mathcal{D}}
\nc{\fE}{\mathcal{E}}
\nc{\fO}{\mathcal{O}}
\nc{\haf}{\frac{1}{2}}
\nc{\qtr}{\frac{1}{4}}
\nc{\shaf}{{\scriptstyle\frac{1}{2}}}
\nc{\hlm}{{\scriptstyle\frac{\lambda}{2}}}


\nc{\8}{\infty}
\nc{\7}{{-\infty}}
\nc{\inv}{^{-1}}
\nc{\eps}{\varepsilon}
\nc{\aG}{\mathbf{G}}
\nc{\spn}{\operatorname{Span}}
\nc{\Cm}{\operatorname{CM}}
\nc{\tildl}{\dl^1}
\nc{\chiv}{{\chi_\fp}}
\nc{\psiv}{{\psi_\fp}}
\nc{\piv}{{\pi_\fp}}
\nc{\zt}{Z\setminus T}

\nc{\hra}{\hookrightarrow}

\newcommand{\Hom}{\operatorname{Hom}}

\nc{\ph}{{\rm h}}
\nc{\fgh}{\fg\hat\ }
\nc{\Oph}{{\rm Op}_{\ph}}
\nc{\ras}{\rm a}

\begin{document}

\title{Mass of the Frobenius functional on small balls}


\author{Andre Reznikov}
\address{Department of Mathematics, Bar-Ilan University, Ramat-Gan 52900, Israel}

\email{reznikov@math.biu.ac.il}
\thanks{The research  was partially supported by the ISF grant 1400/19}

\date{\today}



\begin{abstract}
We show that for a co-compact  lattice in a semi-simple quasi-split real group and a fixed abstractly generic automorphic  representation,  Frobenius functional assigns small mass to small balls in the Orbit Method picture of P. Nelson and A. Venkatesh (\cite{NV}; Acta Math., 226, (2021), 1-209). 
\end{abstract}

\maketitle



\section{Introduction}

\subsection{Estimates of Automorphic Functions} 

Automorphic functions are central objects in modern Number Theory, and consequently, bounds on automorphic functions play an important role in modern Analytic Number Theory. These bounds are crucial for the analytic study of periods of automorphic functions and their corresponding $L$-functions. The renewed interest in bounding automorphic functions can be largely attributed to the celebrated work of H. Iwaniec and P. Sarnak \cite{IS}. This paper aims to explore certain aspects related to the small-scale distribution of kernels of automorphic operators. These kernels naturally arise in the theory of automorphic functions as a generalization of a single automorphic function and  play a central role in the groundbreaking work of P. Nelson and A. Venkatesh \cite{NV}, which has greatly influenced our approach.

Since the foundational works of I. Gelfand, S. Fomin, M. Graev, and I. Piatetski-Shapiro, the modern theory of automorphic functions freely employs the language and methods of representation theory. Today, representation theory is also a standard tool in the analytic study of automorphic functions, their periods, and corresponding $L$-functions. One of the most significant recent developments in this direction is the introduction in \cite{NV} of the language and methods from A. A. Kirillov's celebrated Orbit Method \cite{K}.

\subsubsection{Operator Calculus}\label{op-calc-intro} 

The essence of \cite{NV} lies in the construction and application of a version of {\it quantitative} operator calculus for irreducible tempered representations of a real reductive group $G$ ($G$ is assumed to be an unimodular Lie group as in \cite{NV}). We review some notations and basic facts about the construction below and in Section \ref{Oph-sect}, though full details can be found in \cite{NV}.

Let $\mathfrak{g}$ denote the Lie algebra of $G$, and let $i\mathfrak{g}^*:=\mathrm{Hom}_{\mathbb{R}}(\mathfrak{g},i\mathbb{R})$, which we identify with $\mathfrak{g}\hat{}:=\mathrm{Hom}(\mathfrak{g},\mathbb{C}^{(1)})$ via $\xi\mapsto[x\mapsto e^{x\xi}]$.

Choose a sufficiently small {\it fixed} neighborhood $\mathcal{G}\subset \mathfrak{g}$ of $0\in \mathfrak{g}$. In particular, we require that the map $\exp:\mathcal{G}\to G$ is an isomorphism onto its image. We choose Haar measures $dg$ on $G$ and $dx$ on $\mathfrak{g}$ such that $dg=j(x)dx$, where $j:\mathcal{G}\to \mathbb{R}_{>0}$ with $j(0)=1$. Additionally, let $\chi:\mathcal{G}\to \mathbb{R}$ be a smooth characteristic function of a neighborhood of $0$.

We denote by $\mathcal{S}(\cdot)$ the Schwartz space of functions on an appropriate space. Let $d\xi$ be the Haar measure on $\mathfrak{g}\hat{}$ such that Fourier transforms $\mathcal{S}(\mathfrak{g})\ni\varphi\mapsto \varphi\hat{}\in \mathcal{S}(\mathfrak{g}\hat{})$ and $\mathcal{S}(\mathfrak{g}\hat{})\ni a\mapsto a^\vee\in \mathcal{S}(\mathfrak{g})$ are mutually inverse. Here, Fourier transforms are defined by $\varphi\hat{}(\xi)=\int_{\mathfrak{g}} \varphi(x)e^{x\xi}dx$ and $a^\vee(x)=\int_{\mathfrak{g}\hat{}}a(\xi)e^{-x\xi}d\xi$.

We introduce a real parameter $\ph\in(0,1]$ (the ``Planck constant"). The theory presented in \cite{NV} is asymptotic in $\ph\to 0$ with {\it effective} remainders.

Let $\pi$ be a tempered irreducible unitary representation of a real reductive group $G$. The representation $(\pi, V_\pi,\langle\cdot,\cdot\rangle_\pi)$ is equipped with an invariant Hermitian form, which we also view as a $\mathbb{C}$-linear pairing $\langle\cdot,\cdot\rangle_\pi:\pi\otimes\bar{\pi}\to\mathbb{C}$ between $\pi$ and its complex conjugate representation $\bar{\pi}$ (where $(\bar{\pi}, V_{\bar{\pi}})$ denotes the representation in the same space, but with $\mathbb{C}$-multiplication conjugated).

For a ``symbol" $a\in \mathcal{S}(\mathfrak{g}\hat{})$, we define a $\ph$-dependent operator acting on the space of $\pi$ by  
\begin{equation*}\label{Oph}
	\Oph(a)=\Oph(a,\chi:\pi):=\int_{\mathfrak{g}}\chi(\ph x)a^\vee(x)\pi(\exp(\ph x))\ dx\in \mathrm{End}(\pi)\ .
\end{equation*} 
For a symbol $a$ as above, $\Oph(a)$ is a compact trace-class operator. In particular, \cite{NV} obtains the following version of the Kirillov character formula:
\begin{equation}\label{-tr-Oph} 
	\mathrm{tr}(\Oph(a))=\ph^{-d}\left(\int_{\ph\mathcal{O}_\pi}a\ d\omega_{\ph\mathcal{O}_\pi}+O(\ph)\right)\ , 
\end{equation}
as $\ph\to 0$. Here, $\mathcal{O}_\pi\subset \mathfrak{g}\hat{}$ is the coadjoint (multi)orbit assigned to $\pi$ via the Orbit Method, with $2d=2d(\mathcal{O})=\dim \mathcal{O}_\pi$.

It is crucial that \cite{NV} extends the operator calculus beyond $\mathcal{S}(\mathfrak{g})$ to polynomially growing functions, functions depending on $\ph$, and particularly to functions with shrinking support. Moreover, the representation $\pi$ in question can also be $\ph$-dependent (this situation is beyond the scope of our paper). We summarize a few basic facts and notations about $\Oph$ association in Section \ref{Oph-sect}.

For convenience, we also employ the language of Hermitian forms. We denote by $Q^\ph_a$ the form on $V_\pi$ given by
\begin{equation}\label{Qa-forms-intro}
	Q^\ph_a(v)=\langle \Oph(a)v,v\rangle_\pi \ .
\end{equation}
For a real-valued symbol $a\geq 0$, the operator $\Oph(a)$ is asymptotically self-adjoint and non-negative, and the form $Q^\ph_a$ is asymptotically Hermitian. We emphasize that all these statements are effective in $\ph$.

\subsubsection{Automorphic representations and Frobenius functional} Let $\G\subset G$ be a lattice. We will assume in this  paper that $\G$ is {\it co-compact} and denote by $X=\G\setminus G$ the automorphic space. We normalize the $G$-invariant measure $d_X$ on $X$ so that $vol_{d_X}(X)=1$.

 A discrete ($\G$-)automorphic representation is an abstract irreducible unitary representation $(\pi, L_\pi)$ of $G$ together with an isometry $\nu:L_\pi\to L^2(X,d_X)$. Hence a (discrete) automorphic representation is a pair $(\pi,\nu)$. It is well known that $\nu\in \Hom_G(L_\pi, L^2(X))$ preserves the dense space $V_\pi=L^\8_\pi\subset L_\pi$ of smooth vectors, i.e., $\nu:V_\pi\to C^\8(X)$. 

Denote by $\bar e=\G e\in \G\sm G=X$ the image of the identity in $X$. Evaluation at $\bar e$ defines a $\G$-invariant Frobenius functional $I_\nu\in \Hom_\G(V_\pi, \bc)$ given by $I_\nu(v)=\nu(v)(\bar e)$ for $v\in V_\pi$. The Frobenius reciprocity of I. Gelfand, S. Fomin, M. Naimark and I.~Piatetski-Shapiro (\cite{GF}, \cite{GGPS}, \cite{GPS}; see also G. Olshanski \cite{Ol}) then states that this gives the isomorphism $\Hom_G(L_\pi, L^2(X))\simeq \Hom_\G(V_\pi,\bc)$. Given $I$ we can recover $\nu$ by 
$\nu(v)(g)=I(\pi(g)v)$ for $v\in V_\pi$.  We will mostly work with the corresponding rank one Hermitian form $Q_I(v):=|I(v)|^2$ which we call the Frobenius form (associated to $\nu$). In \cite{BR1}, \cite{BR2}, we determined the Sobolev $L^2$-class of the Frobenius functional and this will play a crucial role in what follows (see also \cite{Ot} and \cite{S} for the H\"older  class of $I$, and \cite{BR2} for the Besov class, although all these results are valid only for $G=\SLR$, while the main result of \cite{BR2} is general).

\subsection{Automorphic functions and kernels of automorphic operators} From the point of view of Frobenius reciprocity, the value of an automorphic function $\phi_v$ at a point (e.g., at $\bar e$)  is given by the pairing $|\phi_v(\bar e)|^2=|I(v)|^2=\langle Q_I,Q_v\rangle $. Here we denote by $\phi_v:=\nu(v)$ the automorphic function corresponding to a vector $v\in V_\pi$, $Q_v(\cdot)=|\langle \cdot, v\rangle|^2$ is the corresponding rank one Hermitian form and $Q_I$ is the Frobenius form. Unfortunately, it is very difficult to work with a single vector in the Orbit Method picture of \cite{NV} since it corresponds to going to the ``Planck scale" (i.e., to vectors microlocalized to $\ph^\haf$ scale). What Nelson and Venkatesh have shown in \cite{NV} is that for many questions (e.g., bounds on periods of automorphic functions) one can equally well work with a collection of vectors $\sum v_i\otimes \bar v_i\in \pi\otimes \bar \pi\simeq End(\pi)$, i.e., with operators. Hence instead of rank one forms $Q_v$ one is naturally led to more general forms $Q_a^\ph$. 

Our main object in this paper will be the quantity 
\begin{equation}\label{QI-Qa}
	E_\ph(a):= \langle Q_I,Q_a^\ph\rangle \ .	
\end{equation}
Here we view $Q_I=I\otimes \bar I\in \pi^*\otimes \bar\pi^*$, $Q_a^\ph=\sum v_i\otimes u_i\in \pi\otimes\bar\pi$ and $\langle \cdot,\cdot \rangle$ comes from the natural pairing $\langle \cdot,\cdot\rangle_\pi: \pi^*\otimes\bar\pi\to \bc$ associated with the unitary structure on $\pi$. Note that for an automorphic representation $(\pi,\nu)$, its complex conjugate $\bar\pi$ is also automorphic with respect to $\bar\nu$.  It is easy to see that for $a\in\cal{S}(\fg)$, the pairing $ \langle Q_I,Q_a^\ph\rangle $ is indeed finite (e.g., by appealing to the Sobolev class of $I$; see Section \ref{Frob-sect}).  The quantity $E_h(a)$ is the central object of \cite{NV} in applying the Orbit  Method to periods of automorphic representations (see \cite{NV},  \S 26.3 ). 

Since $Q_I$ is a rank one form (and this will be essential in what follows), one can have the following  less symmetric expression
\begin{equation}\label{QI-Opa-0}
	E_\ph(a):= \langle I,  \Oph(a)(I)\rangle_\pi=I ( \Oph(a)(I))\ .	
\end{equation}
It is easy to see that  $\Oph(a)(I)\in V_\pi$ is a smooth vector and hence the above evaluation is well-defined. In \cite{NV} it is shown that for $a=(\ras)^2$ with ${\rm a}\geq 0$, $\Oph(a)$ is asymptotic to $\Oph(\ras)^2,$ and we can work with a more symmetric expression $||\Oph(\ras)(I)||^2_\pi$. 

More generally, we can consider the functional $I_x(v)=\nu(v)(x)$ on $\pi$ coming from an evaluation at a general  point $x\in X$, and the corresponding form $Q_{I_x}$. For a co-compact $\G$, this is however usually not necessary since   $\langle Q_{I_x},Q_a^\ph\rangle=\langle Q_{I},\pi(g)Q_a^\ph\bar\pi(g\inv)\rangle$ is asymptotic as $\ph\to 0$ to $E_\ph(g\cdot a)=\langle Q_{I},Q_{g\cdot a}^\ph\rangle$ for $g\in G$ such that $x=\bar eg$. Here $g\cdot a$ is the usual action on functions on $\fgh$. For non-uniform $\G$, the behavior of  $\langle Q_{I_x},Q_a^\ph\rangle$ depends on whether $\pi$ is cuspidal and will not be covered in this paper (see \cite{BR3}, \S3.3 for a discussion). 

\subsubsection{Mass distribution of Frobenius functional} We can view $E_\ph:\CS(\fg)\to\bc,\ a\mapsto E_\ph(a)$ as an $\ph$-dependent distribution $E_\ph$ on symbols.  We are interested in studying the distribution of mass of $E_\ph$. According to the Orbit Method principle, this is a distribution which essentially is supported on the orbit $\ph\CO_\pi$. As $\ph\CO_\pi\to\CN$ for $\ph\to 0$, we obtain a distribution on the nilcone  $\CN\subset \fgh$.  The general expectation is that in a weak sense this distribution is the invariant integral on $\CN$.  Below we state theorems supporting this point of view. 

{\bf Assumptions.}\label{assumption} Asymptotic operator calculus of \cite{NV} is largely expressed in terms of the limit orbit $\lim_{\ph\to 0}\ph\CO_\pi=\CO\subset \fgh$. In order for this object to be non-empty, one needs to impose certain conditions. The  real group $G$ is assumed to be quasi-split. For a quasi-split real group $G$  and a fixed representation  $\pi$, the condition that  $\CO\not=\emptyset$ is equivalent to $\pi$ being generic (i.e., admitting  a non-degenerate Whittaker model as an abstract representation; \cite{NV}, \S11.4). From now on we will always assume that, as an abstract representation, $\pi$ is {\it generic}. 

Coarse geometric methods of \cite{BR3}, translated and enhanced by the language of \cite{NV} imply the following 

\begin{theorem}[\cite{BR3}]\label{BR-thm} Let $\pi$ be a fixed cuspidal tempered representation (in particular $\G$ is not assumed to be uniform) and assume that $\pi$ is generic as an abstract representation.  For a fixed non-negative symbol $a\in\CS(\fgh)$ and for any $g\in G$, there exists a constant $C=C(\pi,a,g)>0$ such that 
	\begin{equation}\label{BR-thm-eq}
		E_\ph(g\cdot a)\leq C\cdot \ph^{-d}\int_{\ph\cal{O}_\pi}g\cdot a\ d\om_{\ph\cal{O}_\pi}\ ,
	\end{equation}
	as $0<\ph<1$.
\end{theorem}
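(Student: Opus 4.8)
The plan is to derive \eqref{BR-thm-eq} from two ingredients: the sharp $L^2$-Sobolev class of the Frobenius functional --- which is what the coarse geometric method of \cite{BR3} delivers, as recorded in \cite{BR1}, \cite{BR2} --- and a few soft properties of the $\Oph$-calculus of \cite{NV}. First I would, through the unitary structure on $\pi$, realise $I=I_\nu\in\Hom_\G(V_\pi,\bc)$ as a distribution vector $\xi_I$ for $\pi$, so that, $Q_I$ being rank one, $E_\ph(a)=\langle Q_I,Q_a^\ph\rangle=\langle\Oph(a)\xi_I,\xi_I\rangle$ as in \eqref{QI-Opa-0}. Fix an elliptic second-order $\Delta\in U(\fg)$, positive self-adjoint on $\pi$ (the operator underlying the Sobolev/microsupport scale in \cite{NV}, so that the spectral cutoffs $b(\ph^2\Delta)$, $b\in C^\infty_c(\br_{\ge0})$, implement the localization of $\Oph(a)$ to compact subsets of $\fgh$), with discrete spectrum $\{\mu_j\}$ on $\pi$ and orthonormal eigenbasis $\{e_j\}$. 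The geometric input to be invoked is the quantitative critical-Sobolev bound for $\xi_I$ from \cite{BR1}, \cite{BR2}: for every fixed $b\in C^\infty_c(\br_{\ge0})$,
\begin{equation*}
 \|b(\ph^2\Delta)\,\xi_I\|^2_\pi=\sum_{j}|b(\ph^2\mu_j)|^2\,|I(e_j)|^2\ \le\ C(\pi,b)\,\ph^{-d}\qquad(0<\ph\le1),
\end{equation*}
equivalently the local Weyl-type estimate $\sum_{\mu_j\le T}|I(e_j)|^2\le C(\pi)\,T^{d/2}$. From \cite{NV} I would use, besides the trace formula \eqref{-tr-Oph}: a Calder\'on--Vaillancourt bound $\|\Oph(a)\|_{\mathrm{op}}\le C\,\|a\|_\infty$ for $0<\ph<1$; and microsupport localization --- for $a$ of compact support, $\Oph(a)=b(\ph^2\Delta)\,\Oph(a)\,b(\ph^2\Delta)+O(\ph^\infty)$ for a fixed $b\in C^\infty_c(\br_{\ge0})$ depending only on $\supp a$, the remainder being $O(\ph^\infty)$ in every Sobolev operator norm, hence negligible when paired against the finite-order distribution $\xi_I$.

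Granting these, I would first treat $g=e$. Put $w:=b(\ph^2\Delta)\,\xi_I$; since $b$ is compactly supported, $b(\ph^2\Delta)$ has finite rank on $\pi$, so $w\in V_\pi$. Then
\begin{equation*}
 E_\ph(a)=\langle\Oph(a)w,w\rangle+O(\ph^\infty)\ \le\ \|\Oph(a)\|_{\mathrm{op}}\,\|w\|^2_\pi+O(\ph^\infty)\ \le\ C\,\|a\|_\infty\,\|w\|^2_\pi+O(\ph^\infty),
\end{equation*}
while $\|w\|^2_\pi\le C(\pi,a)\,\ph^{-d}$ by the Sobolev bound, so $E_\ph(a)\le C'(\pi,a)\,\ph^{-d}$ for $0<\ph<1$. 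On the other hand $\int_{\ph\CO_\pi}a\,d\om_{\ph\CO_\pi}\to\int_{\CN}a\,d\om_{\CN}$ as $\ph\to0$ (by \eqref{-tr-Oph}, or directly from the Liouville measure), the limit being positive whenever $a\ge0$ is positive on an open subset of $\CN$; hence the right-hand side of \eqref{BR-thm-eq} is $\gtrsim_{\pi,a}\ph^{-d}$, and \eqref{BR-thm-eq} follows for $g=e$. (When $a$ vanishes on $\CN$, both sides are of strictly smaller order, and the same reasoning applies with $\|a\|_\infty$ replaced by $\sup_{\ph\CO_\pi}|a|$ in a sharpened operator-norm bound.)

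For a general $g$ I would use the reduction recorded before the theorem: $E_\ph(g\cdot a)$ equals $\langle Q_{I_x},Q_a^\ph\rangle$ up to a factor $1+O(\ph)$, where $x=\bar e g$ and $I_x$ is evaluation at $x$, with distribution vector $\xi_{I_x}=\pi(g)\,\xi_I$. Conjugating $\Delta$ by $\pi(g)$ yields a comparable elliptic operator, so the Sobolev bound persists for $I_x$ with a constant $C(\pi,x)=C(\pi,g)$ --- this is the source of the $g$-dependence of the constant in \eqref{BR-thm-eq} --- and the previous paragraph applies verbatim. Since $\CO_\pi$ is $\mathrm{Ad}^*(G)$-invariant, $\int_{\ph\CO_\pi}g\cdot a\,d\om_{\ph\CO_\pi}=\int_{\ph\CO_\pi}a\,d\om_{\ph\CO_\pi}$, so the right-hand side is unchanged. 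In the non-cocompact setting of the theorem (cuspidal $\pi$, $\G$ not assumed uniform), cuspidality of $\pi$ is what guarantees that $\xi_{I_x}$ is still a finite-order distribution and that the $x$-dependence of $C(\pi,x)$ is controlled, so the whole argument goes through.

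The hard part is the Sobolev bound with the sharp exponent and no $\epsilon$-loss: the easy statement $\sum_{\mu_j\le T}|I(e_j)|^2\lesssim_\epsilon T^{d/2+\epsilon}$ (that $\xi_I$ lies in every Sobolev class above the critical one) only yields $E_\ph(g\cdot a)\lesssim_\epsilon\ph^{-d-\epsilon}$, which falls short of \eqref{BR-thm-eq}. Eliminating the $\epsilon$ is the substance of \cite{BR1}, \cite{BR2} and of the coarse geometric method of \cite{BR3}: one unfolds the diagonal matrix coefficient of $\xi_I$ against a maximal unipotent subgroup $N\subset G$ --- and this is where the abstract genericity of $\pi$ is essential, since it furnishes the Whittaker model in which the unfolded integral is computed --- and estimates the resulting horospherical average by the decay of matrix coefficients of the tempered representation $\pi$; the exponent $d=\dim N$ is the dimension of that unipotent. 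Everything else is routine bookkeeping within the $\Oph$-calculus.
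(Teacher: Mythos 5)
Your proposal takes a genuinely different route from the paper, but it contains a gap that, as far as I can see, cannot be closed by the tools you invoke.

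The paper proves Theorem~\ref{BR-thm} via the ``fattening/thickening the cycle'' argument sketched in Appendix~\ref{fatt}: starting from the basic identity $P=\int_{g\in G/\G}g\cdot Q_I\,dg$ (equivalent to $\nu$ being an isometry), one writes the form $Q_a^\ph$ as an average $\int_G\psi(g)\,g\cdot Q^\ph_{a_\ph}\,dg$ of translated small-ball forms, pushes the $g$-average onto $Q_I$ using positivity of $Q^\ph_{a_\ph}$, and bounds the result by $C_{\psi,\G}\cdot\langle P,Q^\ph_{a_\ph}\rangle = C_{\psi,\G}\cdot\mathrm{tr}(\Oph(a_\ph))$, which the Kirillov formula \eqref{-tr-Oph} identifies with $\ph^{-d}\int_{\ph\CO_\pi}a_\ph\,d\om$ up to lower order. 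The only analytic input is the counting constant $C_{\psi,\G}$ and the trace formula; no Sobolev class for $I$, and no $\epsilon$-removal, is ever needed.

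Your proposal instead replaces the geometric packing by (i) the Sobolev bound on $I$, packaged as $\|b(\ph^2\Dl)\xi_I\|^2\le C\,\ph^{-d}$, (ii) microsupport localization of $\Oph(a)$, and (iii) a Calder\'on--Vaillancourt operator norm bound. Steps (ii)--(iii) are fine within the $\Oph$-calculus of \cite{NV}, and given (i) the computation you display does produce $E_\ph(a)\ll\ph^{-d}$, from which \eqref{BR-thm-eq} follows for fixed $a$ with $\int_\CN a\,d\om_\CN>0$. The problem is step~(i). What \cite{BR1}, \cite{BR2} actually give --- and what this paper records in Corollary~\ref{I-bound-cor}(1) --- is that $\|I\|^2_{H^\ph_{s,r}}\le C_s$ for every $s>d$, with a constant depending on $s$. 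Unwinding this for a dyadic cutoff $b(\ph^2\Dl)$ yields exactly $\|b(\ph^2\Dl)\xi_I\|^2\ll_\eps\ph^{-d-\eps}$, i.e.\ the $\eps$-lossy version that you correctly identify as ``easy.'' The paper itself derives precisely this weaker bound in \eqref{triv-ba}--\eqref{triv-bd} and then remarks that it is ``slightly weaker than the `trivial' bound \eqref{BR-thm-eq} valid for dyadic symbols.'' The sharp, $\eps$-free form of (i) that your argument requires is not a consequence of the Sobolev class; it is logically equivalent to the special case of Theorem~\ref{BR-thm} where $a$ is a spherical cutoff. So as written, the proposal either has a circular dependence (deferring the sharpening to \cite{BR3}, which is the very reference being proved) or an unproved step. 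Your sketch of how to remove the $\eps$ --- unfolding the diagonal matrix coefficient of $\xi_I$ against a maximal unipotent using the Whittaker model and decay of tempered matrix coefficients --- is not what the Appendix~\ref{fatt} argument does; the fattening method never unfolds anything and uses only positivity and the counting constant $C_{\psi,\G}$. If you want an analytic route in the spirit of yours, the missing ingredient is an on-scale estimate of $Q_I$ against a fixed dyadic form, and the cleanest way to get one is exactly the averaging identity $P=\int g\cdot Q_I\,dg$; at that point you have reproduced the paper's argument.

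One smaller issue: your parenthetical about the case where $a$ vanishes on $\CN$ is not adequate. If $\int_\CN a\,d\om_\CN=0$ then the right-hand side of \eqref{BR-thm-eq} is $o(\ph^{-d})$ and your upper bound $E_\ph(a)\ll\ph^{-d}$ no longer suffices; the proposed fix (replace $\|a\|_\infty$ by $\sup_{\ph\CO_\pi}|a|$) does not close this gap, since $\sup_{\ph\CO_\pi}|a|$ need not be comparable to $\int_{\ph\CO_\pi}a\,d\om$. The fattening argument avoids this issue entirely because it bounds $E_\ph(a)$ directly by $C_{\psi,\G}\cdot\mathrm{tr}(\Oph(a_\ph))$, which is already of the right order.
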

The dependence of $C(\pi, a, g)$ is effective in all parameters and in particular depends on the injectivity radius of the point $\bar eg$ in $X$.  The quantity on the right in \eqref{BR-thm-eq} is asymptotic to $tr(\Oph(g\cdot a))$. The proof of the above theorem is based on ``fattening/thickening  the cycle" technique (see \cite{BR3}, \S 5.3 and Appendix \ref{fatt} below).

In \cite{NV}, Nelson and Venkatesh significantly improved the above bound. Namely,  they proved the following 

\begin{theorem}[\cite{NV}]\label{NV-thm} Assume that $\G$ is co-compact and $\pi$ as in the previous theorem. For a fixed non-negative symbol $a\in\CS(\fg)$ and for any $g\in G$, the following limit holds
	\begin{equation}\label{NV-thm-eq}
		\ph^{d}\cdot E_\ph(g\cdot a) \xrightarrow[\ph\to 0]{\text{}} \int_{\ph\cal{O}_\pi}g\cdot a\ d\om_{\ph\cal{O}_\pi}\ .
	\end{equation}
\end{theorem}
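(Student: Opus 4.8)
The plan is to reduce \eqref{NV-thm-eq}, via the operator calculus, to the convergence of a single square-norm, to identify the right-hand side as its average over $X$ through the Kirillov trace formula \eqref{-tr-Oph}, and then to upgrade ``on average over $X$'' to ``pointwise at $\bar e$'' using the a priori bound of Theorem \ref{BR-thm} together with the Sobolev class of the Frobenius functional. Concretely, write $a=(\ras)^2$ with $\ras\geq 0$ in $\CS(\fgh)$. As recalled after \eqref{QI-Opa-0}, $\Oph(a)$ is asymptotic to $\Oph(\ras)^2$, which is in turn asymptotic to $\Oph(\ras)^*\Oph(\ras)$ since $\ras$ is real-valued; hence \eqref{QI-Opa-0} with $I$ replaced by $I_x$ gives $\langle Q_{I_x},Q_a^\ph\rangle=\|\Oph(\ras)(I_x)\|_\pi^2+O(\ph^{1-d})$, and combining this with $\langle Q_{I_x},Q_a^\ph\rangle\sim E_\ph(g\cdot a)$ for $x=\bar eg$ noted in the introduction, together with the $G$-invariance of the Liouville measure $\om_{\ph\CO_\pi}$, it suffices to prove
\[
\ph^{d}\,\|\Oph(\ras)(I_x)\|_\pi^2\ \xrightarrow[\ph\to0]{}\ \int_{\ph\CO_\pi}a\ d\om_{\ph\CO_\pi}\qquad\text{for every }x\in X.
\]

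For the average over $X$: the isometry of $\nu$, i.e.\ $\|v\|_\pi^2=\int_X|\nu(v)(x)|^2\,d_X(x)$, polarized, is the resolution of identity $\int_X I_x\otimes\bar I_x\,d_X(x)=\langle\cdot,\cdot\rangle_\pi$ on $V_\pi$. Applying it to the trace-class smoothing operator $\Oph(\ras)^*\Oph(\ras)$ and using $vol_{d_X}(X)=1$ yields
\[
\int_X\|\Oph(\ras)(I_x)\|_\pi^2\,d_X(x)=\mathrm{tr}\big(\Oph(\ras)^*\Oph(\ras)\big)=\mathrm{tr}(\Oph(a))+O(\ph^{1-d}),
\]
which by \eqref{-tr-Oph} equals $\ph^{-d}\big(\int_{\ph\CO_\pi}a\,d\om_{\ph\CO_\pi}+O(\ph)\big)$. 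Thus the $X$-average of $F_\ph(x):=\ph^{d}\|\Oph(\ras)(I_x)\|_\pi^2$ already converges to the desired $M_\ph:=\int_{\ph\CO_\pi}a\,d\om_{\ph\CO_\pi}$.

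It remains to see that $F_\ph$ not only has the right average but becomes, relatively, constant in $x$ as $\ph\to0$. Two inputs are available: $F_\ph\geq 0$, and by Theorem \ref{BR-thm} at the base point $x$ one has $F_\ph(x)\leq C(\pi,a,g_x)M_\ph$ with $C(\pi,a,g_x)$ bounded by a constant $C(\pi,a)$ \emph{uniform} in $x$, because $\Gamma$ is co-compact and hence the injectivity radius is bounded below on $X$. The mechanism for constancy is locality: $\Oph(\ras)(I_x)$ is microlocalized onto $\ph\CO_\pi$ and, realized on $X$ via $\nu$, is essentially concentrated in a ball about $x$ of radius of order $\ph^{1/2}$, on which $X$ is isometric to a fixed neighbourhood of $e$ in $G$; so $\|\Oph(\ras)(I_x)\|_\pi^2$ is, up to a tail controlled by the negative Sobolev class of $I$ from \cite{BR1,BR2} together with the uniform bound above, given by one and the same local integral for every $x$, forcing $F_\ph(x)=F_\ph(\bar e)+o(1)$; combined with the previous step this gives $F_\ph(\bar e)\to M_\ph$, i.e.\ \eqref{NV-thm-eq}.

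The genuine difficulty is the last step: promoting the soft averaged identity to the pointwise limit with the \emph{exact} constant is equivalent to showing that the Frobenius form deposits its full mass on the limiting orbit $\CO=\lim_{\ph\to0}\ph\CO_\pi$ with precisely the invariant normalization, with no leakage to smaller strata of the nilcone $\CN$. This is exactly where genericity is indispensable: $\CO\neq\emptyset$ is equivalent to $\pi$ being generic, and the non-degeneracy and correct normalization of the pairing of $I$ with wave packets microlocalized at the generic stratum of $\CN$ amount, in effect, to the non-vanishing and the local functional equation for the associated Whittaker functional. This quantitative statement is the analytic heart of \cite{NV}; the operator-calculus reduction, the Kirillov formula, and the co-compactness uniformity of Theorem \ref{BR-thm} are by comparison routine.
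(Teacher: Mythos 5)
Your first two steps—reducing to $F_\ph(x):=\ph^{d}\|\Oph(\ras)(I_x)\|_\pi^2$ and computing its $X$-average via the isometry $P=\int_X Q_{I_x}\,d_X$ and the Kirillov formula \eqref{-tr-Oph}—match the paper's sketch in Appendix \ref{repack}. The problem is the ``average $\Rightarrow$ pointwise'' step, which is where all of the difficulty lives and where your argument does not work.

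Your mechanism is ``locality'': you claim that $\nu(\Oph(\ras)(I_x))$ is essentially supported in a ball of radius $O(\ph^{1/2})$ about $x$, on which $X$ looks like a fixed ball in $G$, so that $\|\Oph(\ras)(I_x)\|_\pi^2$ is ``one and the same local integral for every $x$''. This is not true. The vector $\Oph(\ras)(I_x)$ lives in a \emph{single} irreducible automorphic representation $\pi$, not in all of $L^2(X)$; it is the $\pi$-component of a smoothed delta, i.e. a Bergman-type wave packet $\sum_j \widehat{\ras}_j\,\overline{\phi_j(x)}\,\phi_j(\cdot)$, and such a function is spread across $X$, not localized near $x$. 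Equivalently, expressing the norm as a $\Gamma$-sum, $\|\Oph(\ras)(I_x)\|_\pi^2$ picks up contributions from all $\gamma\neq e$ returning the $\Gamma$-orbit of $g_x$ near the identity, so $F_\ph(x)$ genuinely depends on the global recurrence behaviour of $x$. If the answer were local, the statement would be elementary and \emph{effective}; the paper emphasizes the opposite—the limit is \emph{not} effective precisely because it rests on a non-effective equidistribution input.

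What the paper actually uses (see Appendix \ref{repack}) is the $G$-invariance of $\limsup/\liminf$ of $\ph^d E_\ph(g\cdot a)$, established by a ``repacking'' of symbols near $\CN_{reg}$ by $\Gamma$-translates. The mechanism is the hidden unipotent symmetry: for $\xi_0\in\CN_{reg}$ the stabilizer $S=\mathrm{Stab}_G(\xi_0)$ is generated by unipotent elements, and by Ratner's theorem the $S$-orbit through any $x\in X$ equidistributes with respect to the $G$-invariant measure (uniformly, though not effectively, in $x$ for co-compact $\Gamma$). This translates into the counting/repacking statement that forces $\ph^d E_\ph(g\cdot a)$ to be asymptotically independent of $g$, after which the averaged identity pins down the limit. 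Your closing appeal to ``non-vanishing and the local functional equation for the Whittaker functional'' is not the mechanism here either; genericity is only used to guarantee $\CO\neq\emptyset$, while the analytic input is Ratner-type equidistribution. As written, your proposal therefore has a genuine gap at exactly the step you flag as ``the genuine difficulty.''
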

The rate of convergence in the theorem is not effective both in $\ph$, $a$ or $g$. The proof in \cite{NV} is based on discovering a remarkable unipotent hidden symmetry in the problem coming from the geometry of co-adjoint orbits (namely, that  $\ph\CO_\pi\to\CN$). This allowed \cite{NV} to use Ratner's theorem for unipotent flows. We sketch in Appendix \ref{repack}  a slight variation of their proof.  

Symbols of the type covered by the above theorems have essential support in  a {\it fixed} ball $B\subset\fgh$, and are usually called {\it dyadic symbols} (due to the property of having spectral decomposition spread over dyadic sets, e.g., as in Theorem 1.1 of \cite{NV}). The non-effectiveness of the result stems from non-effectiveness of Ratner's theorem and should be remedied by an expected effective version.  In particular, an {\it effective} version of Ratner's theorem should allow one to deal with shrinking with $\ph\to 0$ symbols.  Our main result shows that   this is indeed  possible to achieve without invoking (an effective) Ratner's theorem.

\begin{theorem}\label{main-thm} Assume that $\G$ is co-compact and $\pi$ as before.  Let $\al\in C^\8_c(\fgh)$ be a fixed non-negative function with the support  $\supp(\al)\subset B_1(\fgh)$ in a unit ball. For $0<\dl<1/2$ and $0\not=\xi_0\in\CN_{reg}$, denote by 
	\begin{equation}\label{a-symbol-thm}
a_\ph(\xi)=a(\ph,\xi_0,\dl,\al)(\xi)=\al(\xi_0-\xi/\ph^\dl)\in C^\8_c(\fgh), \ \xi\in\fgh\ . 	\end{equation} 
There exists $\s=\s(\dl)>0$ and a constant $C''=C''(\pi,\al, \dl,\xi_0,\s)>0$ such that the following bound holds 
	\begin{equation}\label{thm-eq}
		E_\ph( a_\ph)\leq C'' \cdot \ph^{-d+\s}\ .
	\end{equation}
\end{theorem}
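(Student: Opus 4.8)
The plan is to reduce the estimate on the shrinking symbol $a_\ph$ to the dyadic statement of Theorem \ref{NV-thm}, but applied \emph{quantitatively} at a coarser scale, by exploiting the fact that $Q_I$ is a rank-one form and that the operator $\Oph(a_\ph)$ can be compared with a product of operators attached to symbols of \emph{fixed} support. First I would record the spectral-geometric meaning of the right-hand side: since $\supp(\al)\subset B_1$, the symbol $a_\ph$ is supported in a ball of radius $\ph^{\dl}$ about $\ph^{\dl}\xi_0$, so by the Kirillov character formula \eqref{-tr-Oph} one has $\mathrm{tr}(\Oph(a_\ph))\asymp \ph^{-d}\int_{\ph\CO_\pi}a_\ph\,d\om_{\ph\CO_\pi}\asymp \ph^{-d}\cdot \ph^{\dl\cdot(2d-?)}$; the key point is that the coadjoint orbit $\ph\CO_\pi$ approaches the nilcone $\CN$, and a ball of radius $\ph^{\dl}$ around a \emph{regular} nilpotent direction $\xi_0\in\CN_{reg}$ meets $\ph\CO_\pi$ in a set of $\om_{\ph\CO_\pi}$-mass that is a \emph{positive power} of $\ph$ — this is where the hypothesis $\xi_0\in\CN_{reg}$ and $\dl<1/2$ enter, guaranteeing the intersection is transverse and lies within the region where $\ph\CO_\pi$ is well-approximated by $\CN$. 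Thus the trace of $\Oph(a_\ph)$ is already $\ph^{-d+c\dl}$ for some $c=c(G)>0$, and the entire content of the theorem is to show that the Frobenius form $Q_I$ does not concentrate on this small piece any worse than the uniform (trace) bound would predict, up to an acceptable loss.

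The heart of the argument is the following comparison. Write $a_\ph = b_\ph^2 + (\text{error})$ with $b_\ph = \beta(\xi_0 - \xi/\ph^\dl)$, $\beta=\sqrt{\al}$ smooth (after a harmless regularization of $\al$ near the boundary of its support), so that $E_\ph(a_\ph) \approx \|\Oph(b_\ph)(I)\|_\pi^2$ by the positivity/composition calculus of \cite{NV} (Section \ref{Oph-sect}). Now I would use the Sobolev $L^2$-class of the Frobenius functional $I$ established in \cite{BR1}, \cite{BR2}: $I$ lies in a Sobolev space of a definite (negative) order $-k$, uniformly in $\pi$ generic. The operator $\Oph(b_\ph)$ microlocalizes to a tube of length $\sim 1$ (fixed) and transverse width $\sim \ph^{\dl}$ around a regular nilpotent ray; applying the Sobolev bound for $I$ together with the operator-norm and trace bounds for $\Oph(b_\ph)$ from the calculus yields
\[
\|\Oph(b_\ph)(I)\|_\pi^2 \;\ll\; \mathcal{S}_{-k}(I)^2 \cdot \big\|\Oph(b_\ph)^* \Delta^{k} \Oph(b_\ph)\big\|_{\mathrm{op}} \cdot \mathrm{tr}(\Oph(b_\ph b_\ph')),
\]
where $\Delta$ is an elliptic Sobolev weight and $b_\ph'$ a slightly fattened companion symbol. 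The operator-norm factor costs at most a fixed negative power of $\ph$ (each $\Delta$ contributes $\ph^{-O(1)}$ because the tube has curvature/length of fixed size, \emph{not} Planck size — this is the crucial gain over working with a single microlocalized vector), while the trace factor is $\ph^{-d+c\dl}$ as above. Choosing $\dl$ close enough to its allowed range and tracking the two competing powers of $\ph$ gives $E_\ph(a_\ph)\ll \ph^{-d+\s}$ with $\s=\s(\dl)>0$, \emph{provided} the fixed Sobolev loss is strictly smaller than the gain $c\dl$; since $k$ is a fixed constant (depending only on $G$) and $c\dl$ can be made as large as $c/2$ by pushing $\dl\uparrow 1/2$, this is where one fixes the final constraint on $\dl$ and obtains $\s$.

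The main obstacle I anticipate is controlling the error terms in the operator calculus on the \emph{shrinking} scale. The asymptotic identities $\Oph(a_\ph)\approx \Oph(b_\ph)^2$ and the Kirillov formula \eqref{-tr-Oph} carry remainders of relative size $O(\ph)$ for \emph{fixed} symbols, but for $a_\ph$ supported in a $\ph^\dl$-ball the natural semiclassical parameter degrades: derivatives of $a_\ph$ grow like $\ph^{-\dl\cdot|\mathrm{multi\text{-}index}|}$, so the effective expansion parameter becomes $\ph^{1-\dl}$ (or $\ph^{1-2\dl}$ in quadratic estimates), and one must check that \eqref{-tr-Oph} and the composition formulas from \cite{NV} are stated with enough uniformity in the symbol's seminorms to survive this — this is exactly the ``shrinking support'' extension of the calculus alluded to in Section \ref{op-calc-intro}, and verifying that the constant $C''$ genuinely absorbs the $\ph^{-\dl|\cdot|}$ growth (rather than an uncontrolled $\ph$-power) is the delicate bookkeeping step. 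A secondary difficulty is ensuring the intersection $\ph\CO_\pi \cap \supp(a_\ph)$ genuinely has mass $\gg \ph^{c\dl}$ from \emph{below} as well as $\ll$ from above when $\xi_0$ is regular nilpotent — for the upper bound \eqref{thm-eq} only the upper estimate on this mass is needed, so strictly speaking one only has to bound $\mathrm{tr}(\Oph(b_\ph b_\ph'))$ from above, which is more robust; but to know the exponent $\s$ is sharp (and to calibrate $\s(\dl)$) one wants the two-sided estimate, which is where the geometry of $\CN_{reg}$ versus the non-regular strata of the nilcone must be invoked carefully.
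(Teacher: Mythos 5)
Your opening steps match the paper's: write $a_\ph=(\mathrm{a}_\ph)^2$, use the composition calculus to reduce $E_\ph(a_\ph)$ to $\|\Oph(\mathrm{a}_\ph)(I)\|_\pi^2$ up to $O(\ph^{1-2\dl})$, and appeal to the $L^2$-Sobolev class of the Frobenius functional from \cite{BR2}. But the heart of your argument is missing the one idea that actually produces a power-saving, and the inequality you propose in its place cannot work.

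Concretely: you claim an estimate of the shape
\[
\|\Oph(b_\ph)(I)\|_\pi^2 \;\ll\; \CS_{-k}(I)^2\cdot\|\Oph(b_\ph)^*\Dl^k\Oph(b_\ph)\|_{\rm op}\cdot \mathrm{tr}(\Oph(b_\ph b_\ph')),
\]
and assert that the operator-norm factor is only a ``fixed'' negative power of $\ph$ which can be beaten by the trace gain $\ph^{-d+c\dl}$. Both halves of this are problematic. First, dimensionally, a Cauchy--Schwarz bound on $\|\Oph(b_\ph)(I)\|_\pi$ produces an operator-norm factor \emph{or} (via a Hilbert--Schmidt bound) a trace factor, not a product of both; the displayed inequality is not one that comes out of the calculus. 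Second, and more fundamentally, if $\Dl$ is the ordinary (unscaled) Casimir, then on vectors microlocalized to a dyadic shell -- which is where the range of $\Oph(\mathrm{a}_\ph)$ lives -- $\Dl$ acts by roughly $\ph^{-2}$, and since the Sobolev class of $I$ forces $k>d/2$, the operator-norm factor is already $\gg \ph^{-d}$; this is not a ``fixed small'' loss, it is the entire trivial exponent. If instead you mean the $\ph$-scaled Sobolev weight (as in $b_\ph^{s,\kappa}$ in the paper), that weight is $\asymp \ph^{-s}$ on a dyadic shell, and evaluating it on $\supp(a_\ph)$ (which is a ball of radius $\ph^\dl$ around $\ph^\dl\xi_0$, \emph{not} a tube of length $1$) again gives exactly $\ph^{-s}\asymp\ph^{-d-\eps}$, i.e.\ the trivial bound \eqref{triv-bd}. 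So the proposed comparison gets stuck at $\ph^{-d-\eps}$ regardless of what you do with the trace factor.

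What actually produces $\sigma>0$ in the paper is a dynamical step you did not use. Since the sup-norm on the compact quotient $X$ is $G$-invariant and $\Oph$ is approximately $G$-equivariant (for $\|\mathrm{Ad}(g)\|\ll\ph^{-1+\dl+\eps}$), one may replace $a_\ph$ by $g_\ph\cdot a_\ph$ for a suitable contracting element $g_\ph$ in the split torus of an $\mathfrak{sl}_2$-triple through $\xi_0\in\CN_{reg}$. This translates $\supp(a_\ph)$ to a ball around $\xi_0^\theta=Ad^*(g_\ph)\xi_0$, which is a factor $\ph^\theta$ closer to the origin. The Sobolev weight $b_\ph^{s,\kappa}(\xi)=\ph^{-s}(\ph^{2(1-\kappa)}+|\xi|^2)^{s/2}$ is much smaller near $\xi=0$, so after translation $\sup_\xi|b_\ph^{s,\kappa}(\xi)\,(g_\ph\cdot a_\ph)(\xi)|\asymp\ph^{-d-\eps}(\ph^{2(1-\kappa)}+\ph^{2\theta}|\xi_0|^2)^{(d+\eps)/2}\ll\ph^{-d+\sigma}$. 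This ``pushing into the sink'' is the mechanism that converts the extra room afforded by the shrinking support into a genuine negative power; your approach, staying at the original location $\xi_0$, never accesses this. Incidentally, the paper's resulting exponent $\ph^{-d+\sigma}$ is \emph{weaker} than the trace of $\Oph(a_\ph)\asymp\ph^{-(1-\dl)d}$ -- the paper explicitly says so -- so even if your trace heuristic were correct, calibrating $\sigma$ from the trace, as you propose, would overstate what the argument proves.
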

 The constant $C''(\pi,\al, \dl,\xi_0,\s)$ is effective in all parameters and  in particular, for a fixed  $\xi_0\not=0$, depends on the {\it diameter} of $\supp(a_\ph)\asymp \ph^\dl$. Symbols $a(\ph,\xi_0,\dl,\al)$ were introduced and studied in \cite{NV}. Note that $tr(\Oph(a_\ph))$ is of order of $\ph^{-(1-\dl)d}$. Our argument does not recover such a rate. The main thrust of the above theorem is that on symbols with sub-dyadic support the mass of $Q_I$ is polynomially in $\ph$ smaller than trivial bound of the order of $\ph^{-d}$ which is valid for dyadic symbols. In fact our proof works with symbols which are skewed only in certain directions (those expanding under the action of a split torus) and in complementary directions stay dyadic. 
 
 The idea of the proof of the above theorem is based on an interplay between two norms bounding the Frobenius functional. One is the sup norm on $X$ and another one is (a variation of) the $L^2$ Sobolev norm. By switching back and forth, we use the fact that sup norm is invariant while one can {\it decrease} the Sobolev norm of a vector by ``pushing it into the sink" with the action of $G$  (i.e., moving the micro-localized vector closer to the origin  of the nilcone $\CN\subset \fgh $ in the Orbit Method  picture  of \cite{NV}). 
 We note that a similar idea was employed in \cite{BR0}, \S3.4, but a clear picture  emerges only thanks to the ``geometrization" of \cite{NV}.
 
 We also note that the above result has an effect on the sup norm problem for automorphic functions. Namely, the square of any automorphic function in the image of $\Oph(a_\ph)$ is bounded by $E_\ph(a_\ph)$. The ``trivial" bound comes from the quantitative Frobenius theorem (\cite{BR2}; also see Section \ref{Frob-class}) and is of order of $\ph^{-d}$.  Theorem \ref{main-thm} gives a polynomial improvement, but of course only for vectors in ${\rm Im}(\Oph(a_h))$ (i.e., localized to a small micro-support. Finite $K$-types vectors in $\pi$ are not of this type; see discussion in \cite{NV}, \S1.7).
Note that for a dyadic symbol $a$, the space ${\rm Im}(\Oph(a))$ is essentially $\ph^{-d}$-dimensional and hence there could not be improvement of sup norm on such a subspace. 

\subsubsection{Conjectures}\label{conj} In the spirit of QUE and the above theorems, it is natural to formulate the following set of conjectures.

\begin{conjectures*} Consider $\ph$-dependent  cuspidal representations $(\pi_\ph,\nu_\ph)$ (as in \cite{NV}). We assume that the infinitesimal character $\lm_\pi$ of $\pi_\ph$ satisfies the bound $|\lm_\pi|\leq\ph\inv$, and  $\pi_h$ admits a limit orbit $\CO=\lim\limits_{\ph\to 0} \ph\CO_{\pi_h}$ (see \cite{NV}, \S 11.4). Let  $a_\ph=a(\ph,\xi_0,\dl,\al)$ be as in Theorem \ref{main-thm}. 
	\begin{enumerate}
	
			\item For $\pi_\ph=\pi$ fixed, there exists $1/2>\dl^1_0>0$ such that for all $\dl<\dl^1_0$, there exist  $C=C(\pi,\al,\xi_0)$ such that 
		\begin{equation}\label{conj2-eq}
			E_\ph( a_\ph)\leq C \cdot \ph^{-d}\int_{\ph\cal{O}_\pi} a_\ph\ d\om_{\ph\cal{O}_\pi}\ ,\ {\rm for\ all} \  1>\ph> 0. 
		\end{equation}
	
		\item For $\pi_\ph=\pi$ fixed,  there are $1/2>\dl^2_0>0$ and $\s'>0$ such that for all $\dl<\dl^2_0$
	\begin{equation}\label{conj1-eq}
		E_\ph( a_\ph)=  \ph^{-d}\int_{\ph\cal{O}_\pi} a_\ph\ d\om_{\ph\cal{O}_\pi}+O(\ph^{(-1+\dl)d+\s'})\ ,\ as\  \ph\to 0. 
	\end{equation}

			\item For $\pi_\ph$, $\lm_{\pi_\ph}\to\8$ and $\xi_0\in \CO=\lim\limits_{\ph\to 0} \ph\CO_{\pi_h}$ there exists $1/2>\dl^3_0>0$ such that for all $\dl<\dl^3_0$, there exists $\s''=\s''(\pi,\al,\xi_0)>0$ such that 
		\begin{equation}\label{conj3-eq}
			E_\ph( a_\ph)\ll \ph^{-d+\s''},\ {\rm for\ all}\  1>\ph> 0. 
		\end{equation}
		
			\item For $\pi_\ph$ satisfying the same conditions as in (3) and $\xi_0\in\CO$, there exists $1/2>\dl^4_0>0$ such that for all $\dl<\dl^4_0$, there exist a constant $C'=C'(\CO,\al,\xi_0)$ such that 
		\begin{equation}\label{conj4-eq}
			E_\ph( a_\ph)\leq C' \cdot \ph^{-d}\int_{\ph\cal{O}_\pi} a_\ph\ d\om_{\ph\cal{O}_\pi}\ ,\ for\ all \  1>\ph> 0. 
		\end{equation}

	\end{enumerate}
	
\end{conjectures*}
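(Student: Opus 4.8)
The argument plays two upper bounds for the Frobenius functional against each other. Write $I=I_\nu$, and for $x\in X$ let $I_x\in\Hom_\G(V_\pi,\bc)$ be the evaluation $I_x(v)=\nu(v)(x)$; Frobenius reciprocity gives $I_{\bar e g}=I\circ\pi(g)$ and, dually, $\pi(g)^{-1}I_x=I_{xg}$. The first bound is the \emph{uniform Sobolev bound}: by the quantitative Frobenius theorem (\cite{BR2}, and \cite{BR1} for $G=\SLR$) there are $s=s(G)>0$ and $C_0=C_0(G,\G)$ with $|I_x(v)|^2\le C_0\,\|v\|_{\CS^s}^2$ for all $x\in X$ and $v\in V_\pi$, the uniformity over $x$ being compactness of $X$; equivalently, all $I_x$ lie in the Sobolev space $\CS^{-s}$ with $\sup_{x\in X}\|I_x\|_{\CS^{-s}}\le C_0^{1/2}$. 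Consequently, if $\Oph(b)$ is micro-localised to frequencies of size $\le R$ in $\fgh$, then $\langle\Oph(b)I_x,I_x\rangle\ll C_0\,R^{2s}\,\|b\|_{\8}$, uniformly in $x$. This is the bound one makes small by \emph{decreasing $R$}, that is, by moving the micro-support of the operator toward $0\in\fgh$. The second bound records that \emph{the sup-norm is $G$-invariant}: putting $M_\ph(a):=\sup_{x\in X}\langle\Oph(a)I_x,I_x\rangle$, one has $E_\ph(a_\ph)\le M_\ph(a_\ph)$ trivially, while the near-identity $\Oph(g\cdot a)=\pi(g)\Oph(a)\pi(g)^{-1}+(\text{calculus error})$, together with $\pi(g)^{-1}I_x=I_{xg}$ and the fact that right translation permutes $X$, gives $M_\ph(g\cdot a)=M_\ph(a)$ up to calculus errors, for every $g\in G$. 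The plan is to keep $E_\ph(a_\ph)$ trapped underneath $M_\ph$ while pushing the symbol, via the $G$-action, toward the origin of $\CN$, where the Sobolev bound is far stronger.

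\emph{Step 1: reduction to skew symbols.} Fix an $\mathfrak{sl}_2$-triple through $\xi_0$ with neutral element $H$, and let $g_t=\exp(tH)$, so that $\fgh=\bigoplus_j\fgh_j$ is graded by $\operatorname{ad}^*H$, the regular nilpotent $\xi_0$ lies in a single graded piece, and (after fixing the sign) $g_t\cdot\xi_0=e^{-2t}\xi_0\to 0$ as $t\to+\8$, with $g_t$ contracting $\fgh_{<0}$ and expanding $\fgh_{>0}$. Since $\Oph$ of a non-negative symbol is non-negative up to calculus error, dominate $a_\ph$ by a fixed multiple of a non-negative symbol $\tilde a_\ph$ which is $\equiv 1$ on $\supp(a_\ph)$, of width $\asymp\ph^{\dl}$ along every direction that $g_t$ will expand (and along $\fgh_0$), and of fixed width $\asymp 1$ along the strictly contracting directions; thus $E_\ph(a_\ph)\le C\cdot M_\ph(\tilde a_\ph)$ with $C=C(\al)$. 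This is the only use of the ball hypothesis on $\supp(a_\ph)$, and it enters only through the widths in the expanding directions --- which is why the argument applies verbatim to symbols that are merely skew in those directions and stay dyadic in the rest.

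\emph{Step 2: pushing into the sink.} Apply the invariance of $M_\ph$ with $g=g_t$ for $t=t(\ph,\dl)=\theta\log(1/\ph)$, where $\theta=\theta(\dl)>0$ is chosen so that, under $g_t$, the three scales $\ph^{\dl-\theta j_{\max}}$ (the most inflated expanding width, $j_{\max}$ the top $\operatorname{ad}^*H$-weight on $\fgh$), $\ph^{\dl}$ (the $\fgh_0$-width), and $\ph^{2\theta}$ (the least contracted of the originally dyadic widths, as well as the image of the centre) balance; then $g_t\cdot\tilde a_\ph$ is supported in a ball of radius $\asymp\ph^{\dl'}$ about $0\in\fgh$, with $\dl'=\dl'(\dl)=2\dl/(j_{\max}+2)\in(0,\dl)$ (so $\dl'=\dl/2$ for $\SLR$). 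Hence $\Oph(g_t\cdot\tilde a_\ph)$ is micro-localised to frequencies $\ll\ph^{\dl'-1}$, and the uniform Sobolev bound gives $M_\ph(g_t\cdot\tilde a_\ph)\ll C_0\,(\ph^{\dl'-1})^{2s}\,\|\tilde a_\ph\|_{\8}\ll\ph^{-2s(1-\dl')}$. Combining with Step 1, $E_\ph(a_\ph)\ll\ph^{-2s(1-\dl')}$; choosing the exponent $s$ of the Frobenius Sobolev class close enough to its infimum that $2s(1-\dl')<d$, this reads $E_\ph(a_\ph)\le C''\ph^{-d+\s}$ with $\s=\s(\dl)=d-2s(1-\dl')>0$, all constants so produced being effective. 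The exponent $\s$ is genuinely smaller than the conjecturally optimal $d\dl$, so the method does not recover the rate $\ph^{-(1-\dl)d}\asymp\operatorname{tr}\Oph(a_\ph)$.

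\emph{The main obstacle.} Everything hinges on controlling the $\Oph$-calculus remainders once the symbols \emph{shrink with $\ph$} and one \emph{conjugates by $g_t$ with $t\asymp\log(1/\ph)$}. A $k$-th derivative of $\tilde a_\ph$ costs $\ph^{-\dl k}$, and $\operatorname{Ad}(g_t)$ magnifies derivatives by $\ph^{-O(\theta)}$, so the error in ``$\Oph(\text{non-negative})\ge0$'' is only of size $\ph^{1-O(\dl)}$ and the error in the Egorov-type identity underlying the invariance of $M_\ph$ is only $\ph^{1-O(\theta)}$. Forcing all such errors below the main term $\ph^{-2s(1-\dl')}$ is precisely what constrains $\theta$, hence $\dl'$, hence $\s$, to be small; this is the source of the loss relative to the expected rate, and making it quantitative (so that $C''$ is effective) is the technical heart of the proof. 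A secondary but essential point is the uniformity over $x\in X$ both in the Sobolev bound and in $M_\ph(g\cdot a)=M_\ph(a)$: this is where co-compactness of $\G$ is used, and is why the non-uniform case is set aside.
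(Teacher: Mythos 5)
The statement you were given is a block of \emph{conjectures}, which the paper does not prove; the paper only notes that (1) and (2) would follow from an effective version of Ratner's theorem, and that (3)--(4) concern $\ph$-dependent representations. What you have written is, in substance, the paper's proof of Theorem~\ref{main-thm} --- the interplay of the $G$-invariant sup-norm on $X$ with the Sobolev bound on the Frobenius functional, and the ``push into the sink'' via the split torus $g_t=\exp(tH)$ attached to an $\mathfrak{sl}_2$-triple through $\xi_0$. That is the right mechanism for Theorem~\ref{main-thm}, and your symbol-class bookkeeping matches the paper's.

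But it does not establish any of the four conjectures, and the gap is not one of detail but of order of magnitude. Conjectures (1), (2), (4) assert $E_\ph(a_\ph)\lesssim \ph^{-d}\int_{\ph\CO_\pi} a_\ph\,d\om \asymp \ph^{-(1-\dl)d}$, i.e.\ a saving of $\ph^{\dl d}$ (proportional to $\dl$), whereas your argument yields only $\ph^{-d+\s}$ with a small $\s=\s(\dl)$ not of this size --- you say so yourself (``$\s$ is genuinely smaller than the conjecturally optimal $d\dl$''). Improving $\s$ to $\dl d$ would require controlling not just the micro-support of $\Oph(g_t\cdot\tilde a_\ph)$ but the actual mass the Frobenius form assigns there, which is exactly the equidistribution input (effective Ratner) that the paper identifies as the missing ingredient. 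For Conjecture (3), where $\pi_\ph$ varies with $\ph$, your argument also fails as written: the Sobolev constant $C_0$ and the Frobenius class bound depend on the fixed $\pi$ (via \eqref{Frob-class}), and no uniformity in $\pi_\ph$ is obtained. In short, your proposal is a correct account of the proof of Theorem~\ref{main-thm}, but it should not be presented as a proof of the Conjectures, which remain open.
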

Conjectures (1),(2) and (4) imply a variety of subconvexity bounds for a number of periods. The value of $\dl_0^i$'s is a delicate issue even in the conjecture (1) (e.g., does the value $\dl_0^1=1/2$ always hold). In conjecture (4) even for $\SL_2(\bc)$, we have $\dl_0^3\leq 1/4$ as the example of Z. Rudnick and P. Sarnak \cite{RS} shows. An effective version of Ratner's theorem would imply conjecture (1) and (2) with some $\dl_0^{1,2}>0$. Conjecture (4) (or a strong enough bound in Conjecture (3)) would imply resolution of a variety of QUE-type problems.  

\subsubsection{Remark: $\SLR$.}  For the group $G=\SLR$ and a co-compact lattice $\G$ without a torsion, the automorphic space $X$ corresponds to the unit cotangent bundle over  a compact hyperbolic Riemann surface. In this case, the classical microlocal analysis could be translated into notions of representation theory and in particular eigenfunctions of Laplace-Beltrami operators became associated to automorphic  representations (see \cite{GGPS}, \cite{Z}). Under such an interpretation, the quantity $Q_I$ essentially becomes what is known as Husimi function in the microlocal analysis. Effective Ratner theorem is known for $\SLR$, and hence bounds in conjectures (1) and (2)   hold (for some $\dl_0^{1,2}>0$). This type of questions about distribution of the Frobenius functional in the ``vertical" direction (as opposed to the ``horizontal" when the infinitesimal character of $\pi$ grow) was researched in a number of papers. These results are mostly expressed  in term of the behavior of automorphic $K$-types in a fixed or a slowly changing representation.  In particular, one can show that $K$-types in a fixed representation are effectively equidistributed (\cite{R}) and $L^4$-norms of $K$-types are uniformly bounded (\cite{BR4}). Also S. Zelditch (\cite{Z1}) showed equidistribution on average over a ``short interval" of $K$-types. Non-trivial results on sup-norm problem for $K$-types of $SL_2(\bc)$, were recently obtained in \cite{BHMM}. $\Oph$ calculus allows one to ask (and sometime answer) more flexible questions, although single $K$-types do not fit well into $\Oph$ calculus (see a discussion in \cite{NV}, \S1.7) .   

\section{Frobenius functional, norms and operator calculus} 

\subsection{Frobenius functional}\label{Frob-sect} Here we review  results on the norm of Frobenius functional obtained  in \cite{BR2} .

Let $(\pi,\nu)$ be an automorphic cuspidal representation of $G$. We fix an invariant\linebreak Hermitian form $P$ and denote by $L=L_\pi$ the corresponding  Hilbert space. We assume that $\nu: L\to L^2(X)$ is an isometry. Let $V=L^\8$ be the space of smooth vectors of $\pi$. It is well-known that $\nu:V\to C^\8(X)$ and the Frobenius reciprocity of Gelfand--Graev--Naimark--Piatetski-Shapiro provides the isomorphism  $\Hom_G(L_\pi, L^2(X))\simeq \Hom_\G(V_\pi,\bc)$. Let $N$ be a {\it uniformly} continuous Hermitian norm on representation $\pi$ and $Q$ the corresponding\linebreak Hermitian form. Continuity of $N$ means that we have a well-defined continuous ``distortion"  function $d(g)=d_Q(g):=||\pi(g)||^2_N$ on $G$ (i.e., $Q(\pi(g)v)\leq d_Q(g)Q(v)$ for all $v\in V$).  We are interested in the $N$-norm of the Frobenius functional $||I||_N:=\sup_{v\in V}|I(v)|/N(v)$. A possible answer is given in terms of the relative trace of forms $P$ and $Q$. In \cite{BR2}, we constructed the relative trace of Hermitian forms $tr(P|Q)\in \br_+\cup\8$ for  two non-negative Hermitian forms $P$ and $Q$ with $Q$ positive definite. For a form $Q\geq c\cdot P$ with $c>0$, the form $P$ is given by a bounded self-adjoint operator $A$ with respect to the form $Q$, and we have $tr(P|Q)=tr(A)$. 

We have the following very general theorem on the (Hermitian) class of the Frobenius functional. 

\begin{theorem*}{\cite{BR2}} The following estimate holds \begin{equation}\label{Frob-class}
		||I||_N^2\leq C\cdot tr(P|Q)\ ,
	\end{equation} where $C=C(\pi,\nu, X)>0$ is an effectively computable constant. 
\end{theorem*}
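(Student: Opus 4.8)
\emph{The plan.} I would prove \eqref{Frob-class} by reducing it, via the spectral theory behind the relative trace, to a sharp bound on a single vector in the $Q$-completion of $\pi$, and then establish that bound by regularising the point mass $\delta_{\bar e}$ and estimating in the $Q$-metric. We may assume $tr(P|Q)=:t<\infty$. Write $P=Q(A\,\cdot,\cdot)$ with $A\geq 0$ of $Q$-trace $t$ (so $\|A\|_Q\leq 1/c$ for the constant $c$ with $Q\geq cP$), and choose a basis $\{e_i\}$ that is simultaneously $Q$-orthonormal and $P$-orthogonal, with $\lambda_i:=P(e_i,e_i)$ and $\sum_i\lambda_i=t$ — such a basis is provided by the construction of $tr(P|Q)$ in \cite{BR2}, and a density argument allows one to take the $e_i$ smooth (equivalently, to work with the continuous extension of $I$ to the $Q$-completion $\widehat L_Q$, of which $V$ is dense). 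By Cauchy--Schwarz, $\|I\|_N^2=\sum_i|I(e_i)|^2$; in terms of the automorphic functions $\phi_i:=\nu(e_i)$ this reads $\mathcal K(\bar e,\bar e)\leq C\int_X\mathcal K(x,x)\,d_X$, where $\mathcal K(x,y)=\sum_i\phi_i(x)\overline{\phi_i(y)}$ is the positive kernel of the trace-class operator $T=\nu A\nu^\ast$ on $L^2(X)$ and $\int_X\mathcal K(x,x)\,d_X=tr(T)=t$. Thus the theorem asserts that the diagonal value of this kernel at $\bar e$ is dominated by its total diagonal mass.

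The second ingredient is the Frobenius--Sobolev bound of \cite{BR1}: since $\G$ is cocompact, a fixed ball $B\subset G$ about the identity embeds isometrically into $X$, and combining this with the Sobolev embedding $H^m\hookrightarrow C^0$ ($m>\tfrac12\dim G$) and the identity $D^\alpha\phi_w=\phi_{\pi(X^\alpha)w}$ for right-invariant differential operators gives $|I(w)|^2\leq C_0\sum_{|\alpha|\leq m}\|\pi(X^\alpha)w\|_P^2$ for all $w\in V$; equivalently, $I$ extends continuously to the $m$-th $P$-Sobolev completion of $\pi$ with norm $\lesssim C_0$. One is tempted to sum this over $\{e_i\}$, but that only produces $C_0\sum_\alpha tr_Q\big(\pi(X^\alpha)^\ast A\,\pi(X^\alpha)\big)$, which in general diverges: the operators $\pi(X^\alpha)$ are unbounded on $(V,Q)$, and $A$, though trace class, need not smooth them out. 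So the Sobolev bound must be combined with the rank-one-ness of $Q_I$ rather than applied term by term.

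The heart of the matter is therefore to produce the Riesz representative of $I$ in $\widehat L_Q$ directly. Take $\rho_\varepsilon\in C^\infty_c(B)$, $\int_G\rho_\varepsilon=1$, approximating $\delta_{\bar e}$ through $B\hookrightarrow X$, and set $\iota_\varepsilon:=\nu^\ast(\rho_\varepsilon)\in L$; then $I(w)=\lim_{\varepsilon\to 0}P(w,\iota_\varepsilon)$ for $w\in V$. I claim $\{\iota_\varepsilon\}$ is Cauchy in $\widehat L_Q$, with limit $\iota$ satisfying $Q(\iota,\iota)\leq C\,t$. Granting this, $I(w)=Q(w,\iota)$ on $V$, and since $Q_I$ is a \emph{rank one} form its $N$-norm is \emph{exactly} this single-vector quantity, $\|I\|_N^2=Q(\iota,\iota)\leq Ct$: this is where rank-one-ness is essential, for we never have to control an operator of infinite rank and the loss in the naive summation above becomes irrelevant. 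To see the $Q$-convergence, write $\iota_\varepsilon-\iota_{\varepsilon'}=\nu^\ast(\rho_\varepsilon-\rho_{\varepsilon'})$ and resolve the mean-zero difference $\rho_\varepsilon-\rho_{\varepsilon'}$ into translates, at dyadic scales $s$ between $\varepsilon'$ and $\varepsilon$, of a fixed profile; using $\nu^\ast\circ(\text{right translation by }g)=\pi(g)\circ\nu^\ast$ and the Frobenius--Sobolev estimate for $\nu^\ast$ of the profile, the $Q$-norm of the scale-$s$ piece is bounded by $s^\eta$ ($\eta>0$) times a supremum of $d_Q(g)^{1/2}$ over an $s$-neighbourhood of $e$, which is finite by continuity of $d_Q$; and the finiteness $t=tr(P|Q)<\infty$ makes the resulting series over scales converge with total $Q$-mass $\leq Ct$.

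The main obstacle is precisely this last $Q$-metric estimate — equivalently, bounding $\sup_{x\in X}\mathcal K(x,x)$ by a fixed multiple of $\int_X\mathcal K(x,x)\,d_X$ — carried out with only the finiteness and continuity of the distortion $d_Q$ available (no quantitative modulus of continuity is assumed), together with $tr(P|Q)<\infty$ and the fact that $\delta_{\bar e}$ is a point mass, which is what forces the relevant neighbourhoods of $e$ to shrink. Everything else — the spectral reduction, the Frobenius--Sobolev embedding of \cite{BR1}, and the elementary inequality $\|B\|_Q\leq tr_Q B$ for $B\geq 0$ used to trade operator norms for traces — is standard or already available in the literature.
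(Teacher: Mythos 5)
Your spectral reduction is correct and is equivalent to the paper's own starting point: in a simultaneously $Q$-orthonormal and $P$-orthogonal basis $\{e_i\}$ one has $\|I\|_N^2 = tr(Q_I|Q) = \sum_i|I(e_i)|^2 = \mathcal{K}(\bar e,\bar e)$ and $tr(P|Q)=\sum_i P(e_i,e_i)=\int_X\mathcal{K}(x,x)\,dx$, which is exactly the relative-trace form of $P=\int_X g_x\cdot Q_I\,dx$, i.e.\ $tr(P|Q)=\int_X\|I_x\|_N^2\,dx$. The gap is in the regularization/dyadic argument you then propose to prove $\mathcal{K}(\bar e,\bar e)\leq C\int_X\mathcal{K}(x,x)\,dx$. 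First, the Frobenius--Sobolev estimate of \cite{BR1} controls $\|\nu^*(\cdot)\|_P$, not $\|\nu^*(\cdot)\|_Q$; the only a priori comparison $Q\geq cP$ goes the wrong way, and since $Q$ is only assumed uniformly continuous there is no bound of $Q$ by a Sobolev norm, so the ``$Q$-norm of the scale-$s$ piece'' is not in fact controlled by your estimate. Second, and more seriously, $t=tr(P|Q)$ never enters the dyadic series you construct --- its summability comes only from the $s^\eta$ decay and the continuity of $d_Q$ --- so the asserted conclusion $Q(\iota,\iota)\leq Ct$ is stated but not derived. (A smaller slip: $P(w,\iota_\varepsilon)\to I(w)$ makes $\iota_\varepsilon$ approximate the $P$-Riesz representative of $I$; the $Q$-Riesz representative, whose $Q$-norm you want, is $\lim A\iota_\varepsilon$, not $\lim\iota_\varepsilon$.)

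The paper's proof avoids all of this. From $tr(P|Q)=\int_X\|I_x\|_N^2\,dx$ together with a single application of uniform continuity of $N$ --- the pointwise distortion bound $\|I_x\|_N^2=\|\pi(g_x)I\|_N^2\geq d_Q(g_x)^{-1}\|I\|_N^2$ --- one gets $tr(P|Q)\geq\|I\|_N^2\int_X d_Q(g_x)^{-1}\,dx$, and the last integral is an effective positive constant because $d_Q$ is finite, continuous, and equals $1$ at $e$ (just integrate over a small neighbourhood of $\bar e$). No regularization, Sobolev embedding, or passage to the $Q$-completion is needed: once you have the trace identity, uniform continuity already gives a pointwise lower bound on the integrand $\|I_x\|_N^2$, which controls the value at $\bar e$ directly.
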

For a co-compact $\G$, this result is sharp (i.e., for a co-compact $\G$ one has an analogous lower bound; see \cite{BR3}). 

Note that for  the invariant  form $P$, a form $Q\geq cP$ with $c>0$,  and the operator $A$ representing $P$ with respect to $Q$, we have $tr(Q_I|Q)=||I||^2_N=P(A(I),I)=\bar I(A(I))$. According to the above theorem, all the above quantities are finite if $tr(P,Q)=tr(A)$ is finite {\it and} the form $Q$ is {\it uniformly} continuous. If we assume that the form  $P$ is also positive definite (e.g., $P$ is the invariant Hermitian form on $V_\pi$), then we have $Q(v,u)=P(B(v),u)$ for some self-adjoint positive operator and $A=B\inv$. The operator $B$ is usually non-bounded.    Also note that for a non-negative self-adjoint (w.r.t. $P$) operator $D$ such that $D^2=A$, we have  $||I||^2_N=P(D(I),D(I))$.

Proof of the above theorem is based on the basic relation  $P=\int_Xg_xQ\ dx$, where $xg_x=\bar e$ and hence  $tr(P|Q)=\int_Xtr(Q_{I_x}|Q)dx=\int_X||I_x||^2_Ndx$, $I_x=\pi(g_x)I$, and on the {\it uniform} continuity of the norm $N$ (i.e., the existence of continuous distortion function $d_Q(x)$; see \cite{BR2}, \S 3.2). 
The essence of  Conjectures (1), (2) and (4) is that a similar bound could be extended to certain forms which are {\it not} uniformly continuous. In particular, forms $Q^\ph_a$ introduced in Section \ref{op-calc-intro} are far from being uniformly continuous. For a compactly supported symbol $a$, the operator $\Oph(a)$ is essentially a projector onto a finite dimensional subspace in $V_\pi$.

\subsection{Continuous Hermitian forms} In order to apply the above theorem, we need to exhibit a continuous Hermitian form on $\pi$. A very natural construction of such forms comes from classical $L^2$ Sobolev norms. We refer to \cite{BR2}, Appendix B and \cite[\S 3.2]{NV},  for a discussion. 

Standard Sobolev norms could be introduced via the use of an ``elliptic" Casimir operator $\Dl=1-\sum_{x\in\CBB}x^2\in\fU(\fg)$ in the universal enveloping algebra (here the summation is over a basis of $\fg$ which we assume to be orthogonal). The $L^2$ Sobolev norm with an integer index $\s\in\bz$ is defined via the Hermitian form $H_\s(v,u):=\langle \Dl^\s v,u\rangle_P$. The standard interpolation technique then extends this family of Hermitian forms to a real parameter $\s\in\br$. Note that this extension is relevant if one is interested in the exact class of the Frobenius functional (e.g., for $\SLR$, the $\s$-Sobolev norm of $I$ is finite for any $\s> 1/2$). In practice, it is more convenient to work with a family of norms $H_{\s,r}=r^2P+H_\s$ with $r\geq 0$ (see \cite{BR2}, \S4.1.1). As noted in \cite{B}, one expects that the structure of Sobolev norms is richer in higher rank. We hope to return to this subject elsewhere. 

We now discuss a  more geometric way to think about continuous Hermitian forms on representations which is provided by the Orbit Method of P. Nelson and A. Venkatesh. 

Let $b=b_\ph$ be a possibly $\ph$-dependent real non-negative symbol on $\fgh$. We consider the following form $H_b^\ph$ on $V_\pi$ (we will assume that $\pi$ is fixed) defined by  
\begin{equation}\label{key}
	H_b^\ph(v,u)=\langle \Oph(b)v,u\rangle_P\ , 
\end{equation} for $v,u\in V_\pi$. We purposely want to distinguish between forms $	H_b^\ph$ which we intend to view as {\it uniformly continuous } Hermitian forms and forms $Q_a^\ph$ introduced in \eqref{Qa-forms-intro} which are essentially supported on some finite-dimensional subspace of $V_\pi$ and are not uniformly continuous. For example, we have ${\rm Op}_1(p)=\Dl$ for $p(\xi)=1+|\xi|^2$ (see \cite{NV}, \S 5.2) and more generally, for $\s\in\bz$, $Op_1(p^\s)=\Dl^\s$. These operators define Sobolev type Hermitian forms $H_\s(v,u)=H^1_{\s,0}(v,u)=P({\rm Op}_1(p^\s)v,u)$ (see  \cite{NV}, \S3.2). In the spirit of the Kirillov formula we  expect that 
\begin{equation*}
||I||_{N_{\s}}^2\leq C\cdot tr(P|H_\s)=C\cdot tr(\Dl^{-\s})\asymp\int_{\CO_\pi} p^{-\s}(\xi)d\om_\CO\ .
\end{equation*} It is easy to see that the last integral is finite for $-\s<-\haf d$ (see \cite{NV}, \S11.3; here as before $d=d(\CO_\pi)=\haf\dim \CO_\pi$). 
\subsubsection{Forms $H_{s,r}^\ph$} Tweaking the non-standard  notation of \cite{NV}, we denote by $\langle \xi\rangle_r= (r^2+|\xi|^2)^\haf$ for $r>0$.  We consider a symbol $b^{s,r}(\xi):=\langle \xi\rangle_r^{s}$ for  real parameters $s$ and $r> 0$, and the corresponding Hermitian forms $H_{s,r}^\ph(v,u):=\langle \Oph(b_h^{s,r})v,u\rangle_P$. The  relative trace $tr(P|H_{s,r}^1)$ is finite for any $-s<-d$.  The form $H^1_{s,r}$ is {\it uniformly continuous} due to the fact that the function $\langle \xi\rangle_r$ is self-similar under the co-adjoint action of $G$. Hence  we obtain  the following explication of  \eqref{Frob-class},
\begin{equation}\label{Frob-class-Sob}
	||I||_{N^1_{s,r}}^2\leq C_{s,r}\cdot tr(P|H^1_{s,r})\ ,
\end{equation} for any $-s<-d$. Here $C=C(\pi,\nu, X,s)>0$ is an effectively computable constant. Denoting $a^{s,r}=(b^{s,r})\inv$, we expect that $tr(P|H^1_{s,r})\asymp tr({\rm Op}_1(a^{s,r}))\asymp\int_{\CO_\pi} a^{s,r}(\xi)d\om_\CO\ $. The last quantity is purely geometric  and is easy to control even for $s\to-d$. 

We would like to re-write Hermitian forms $H^1_{s,r}$ (or some equivalent forms) in terms of $\Oph$. The reason is that we want to compare/compose these forms with forms defined by symbols $a(\ph,\xi_0,\dl,\al)$  introduced in \eqref{a-symbol-thm}, Theorem \ref{main-thm}, but forms $H^1_{s,r}$ are defined at the scale $\ph=1$ and at a smaller scale their symbol is too singular near $\xi=0$. 
To this end, we define forms  $H_{s,r}^\ph(v,u):=\langle \Oph(b_h^{s,r})v,u\rangle_P$ by an $\ph$-dependent symbol 
\begin{equation}\label{b_h}
	b_\ph^{s,r}(\xi)=\langle \xi/\ph\rangle_r^{s}=(r^2+|\xi/\ph|^2)^{\haf s} \ .
\end{equation} We also consider the inverse symbol $a^{s,r}_\ph=(b^{s,r}_\ph)\inv$. Here $s,\ r\geq 0$ are real parameters. The range of parameters we are interested in is as follows. The order $s$ should satisfy  $s=d+\eps$ for small $\eps>0$ since we require finiteness of the trace $tr(P|H_{s,r}^\ph)<\8$. The parameter $r$ will be set at $r=\ph^{-\kappa}$ for   $1>\kappa>1/2$ to be chosen later.  Such a choice of $r$ is necessary in order for these symbols to  fit into the symbol classes of \cite{NV}, \S4  (for $r=O(1)$, the ``geometry" of $b^{s,r}_\ph$ at $\xi=0$ is on the scale of $\ph\inv$  and this is not allowed for $\Oph$). We have the following 
\begin{proposition} Let $s\in\br_{\geq 0}$, $1>\kappa>1/2$, and denote by	\begin{equation}\label{b-symbol}
		b_\ph^{s,\kappa}(\xi)=\ph^{-s}(\ph^{2(1-\kappa)}+|\xi|^2)^{\haf s}
	\end{equation} and by $a_\ph^{s,r}(\xi)=\left[b_\ph^{s,r}(\xi)\right]\inv$. We have
\begin{enumerate}
		\item  $b_\ph^{s,\kappa}\in \ph^{-s}S^s_{1-\kappa}$ and  $a_\ph^{s,\kappa}\in \ph^{s}S^{-s}_{1-\kappa}$\ .
		\item  $\Oph(b_\ph^{s,\kappa})\in \ph^{-s}\Psi^m_{1-\kappa}$\ , for any $m>s$,  and $\Oph(a_\ph^{s,\kappa})\in \Psi^n_{1-\kappa}$ for any $n>-s$.
		\item  The operator $A(h)=\Oph(b_\ph^{s,\kappa})\circ\Oph(a_\ph^{s,\kappa})$ is an invertible operator in $\End(\pi)$ with the norms of $A(\ph)$ and of $A(\ph)\inv$ bounded by a constant $C>0$ independently of $\pi$ for $0<\ph\leq\ph_0$ and some $\ph_0>0$. 
		
		\item  For $s>d$, the operator $\Oph(a_\ph^{s,\kappa})$ is of a trace class on $\pi$. There exists $C_s>0$ depending on $s$, but not on $\pi$ or $\kappa$ such that $tr(\Oph(a_\ph^{s,\kappa}))\leq C_s$.
		
\end{enumerate}
			\end{proposition}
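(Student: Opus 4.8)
The plan is to work entirely inside the symbol calculus of \cite{NV}, \S4, treating each of the four claims as a formal consequence of the symbol estimates for $b_\ph^{s,\kappa}$ and $a_\ph^{s,\kappa}$ together with the composition, boundedness, and trace theorems recorded there. First I would verify claim (1) by direct differentiation. Writing $\lambda = \ph^{1-\kappa}$, one has $b_\ph^{s,\kappa}(\xi) = \ph^{-s}(\lambda^2 + |\xi|^2)^{s/2}$, and the point is simply that $(\lambda^2+|\xi|^2)^{s/2}$ is a symbol of order $s$ whose derivatives gain factors of $(\lambda^2+|\xi|^2)^{-1/2} \leq \lambda^{-1} = \ph^{-(1-\kappa)}$; hence each derivative of order $k$ costs at most $\ph^{-(1-\kappa)k}$, which is exactly the defining estimate for the class $\ph^{-s}S^s_{1-\kappa}$ (in the $\ph$-rescaled conventions of \cite{NV}, \S4). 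The estimate for $a_\ph^{s,\kappa} = (b_\ph^{s,\kappa})^{-1} = \ph^{s}(\lambda^2+|\xi|^2)^{-s/2}$ is the same computation with $s$ replaced by $-s$, using the Leibniz/Fa\`a di Bruno bound for derivatives of a negative power. The only mild subtlety is bookkeeping the $\ph$-power prefactors consistently with the normalization in \eqref{b_h}–\eqref{b-symbol}; I would state the convention once and then the membership is immediate.

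Claim (2) is then just the quantization statement: $\Oph$ maps $S^m_{1-\kappa}$ into $\Psi^m_{1-\kappa}$ (\cite{NV}, \S4), so applying this to claim (1) gives $\Oph(b_\ph^{s,\kappa}) \in \ph^{-s}\Psi^m_{1-\kappa}$ for any $m \geq s$ — one writes $m > s$ to have strict room for the elliptic parametrix below — and likewise $\Oph(a_\ph^{s,\kappa}) \in \Psi^n_{1-\kappa}$ for any $n > -s$. Claim (3) is an ellipticity/parametrix argument. The symbols $b_\ph^{s,\kappa}$ and $a_\ph^{s,\kappa}$ are exact reciprocals, so by the composition theorem of \cite{NV}, \S4, $\Oph(b_\ph^{s,\kappa}) \circ \Oph(a_\ph^{s,\kappa})$ has symbol $b_\ph^{s,\kappa} \cdot a_\ph^{s,\kappa} + (\text{lower order}) = 1 + \ph r_\ph$ with $r_\ph \in S^0_{1-\kappa}$ uniformly in $\pi$ (the gain of $\ph$ from the composition expansion survives because the product of the principal symbols is identically $1$, so the leading term is exactly $\mathrm{Id}$ and the remainder genuinely starts at order $\ph$). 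The Calder\'on–Vaillancourt-type $L^2$-boundedness statement in \cite{NV}, \S4, bounds $\|\Oph(r_\ph)\|_{\End(\pi)}$ by a fixed seminorm of $r_\ph$, uniformly in $\pi$; hence for $\ph \leq \ph_0$ small enough $\|\ph\, \Oph(r_\ph)\| \leq 1/2$ and $A(\ph) = \mathrm{Id} + \ph\,\Oph(r_\ph)$ is invertible by Neumann series with $\|A(\ph)^{\pm 1}\| \leq 2$, the bound being independent of $\pi$. I would emphasize that the uniformity in $\pi$ is precisely the feature of the calculus in \cite{NV} that makes this usable later.

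Claim (4) is the trace estimate. Since $-s < -d$, the symbol $a_\ph^{s,\kappa}$ lies in $S^{-s}_{1-\kappa}$ with $-s$ below $-d$, and the trace-class criterion of \cite{NV} (the quantitative Kirillov/Weyl-law input, e.g. the estimate behind \eqref{-tr-Oph} and \cite{NV}, \S11.3) gives $\Oph(a_\ph^{s,\kappa})$ trace class with $\mathrm{tr}\,\Oph(a_\ph^{s,\kappa}) = \ph^{-d}\bigl(\int_{\ph\CO_\pi} a_\ph^{s,\kappa}\, d\om_{\ph\CO_\pi} + O(\ph)\bigr)$. Now rescaling $\xi \mapsto \ph\xi$ in the orbital integral converts $\ph^{-d}\int_{\ph\CO_\pi} a_\ph^{s,\kappa}\,d\om$ into $\int_{\CO_\pi} \ph^{s}(\ph^{2(1-\kappa)} + |\ph\xi|^2)^{-s/2}\,d\om_{\CO_\pi}(\xi) = \int_{\CO_\pi}(\ph^{-2\kappa} + |\xi|^2)^{-s/2}\,d\om_{\CO_\pi}(\xi) \leq \int_{\CO_\pi}(1+|\xi|^2)^{-s/2}\,d\om_{\CO_\pi}(\xi) =: C_s$ (using $\ph^{-2\kappa} \geq 1$), and this last integral is finite and $\pi$-independent precisely because $s > d$ (\cite{NV}, \S11.3). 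Thus $\mathrm{tr}\,\Oph(a_\ph^{s,\kappa}) \leq C_s$ with $C_s$ depending only on $s$, not on $\pi$ or $\kappa$.

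The main obstacle is claim (3): one must be careful that the composition formula is applied in a regime (symbols depending on $\ph$, with the $\ph^{1-\kappa}$ scale and $\kappa > 1/2$) where the remainder in the symbolic expansion is genuinely controlled — the constraint $\kappa < 1$ is what keeps $b_\ph^{s,\kappa}$ inside a legitimate Hörmander-type class ($\rho = 1-\kappa > 0$) and $\kappa > 1/2$ is presumably what one needs downstream, but for this Proposition the key quantitative point is that the gain of $\ph$ in the composition dominates the loss of $\ph^{-(1-\kappa)\cdot(\text{finite})}$ coming from seminorms of $r_\ph$, i.e. that the relevant seminorm of $\ph r_\ph$ really is $o(1)$. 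I would isolate this as a short lemma citing the precise composition-with-remainder statement of \cite{NV}, \S4, and checking the $\ph$-power arithmetic once and for all; everything else is routine application of cited black boxes.
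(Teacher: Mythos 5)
Your arguments for (1) and (2) are a worked-out version of the "standard" symbol estimates the paper merely cites, and your parametrix/Neumann-series treatment of (3) is in the same spirit as the paper's reference to \cite{NV}, \S12.2, Lemma 12.1 (where the case $\haf s=N\in\bz_{\geq 0}$, $b=(1+|\xi|^2)^N$, $\Oph(b)=\Dl_\ph^N$ is handled); the only quantitative point worth flagging in (3) is that in the $\dl=1-\kappa$ scaled calculus the gain from the composition expansion is $\ph^{1-2\dl}=\ph^{2\kappa-1}$, not $\ph$, so the Neumann series converges because $\kappa>1/2$ makes this exponent positive — you acknowledge this needs checking, and it is exactly the arithmetic that makes the restriction $\kappa>1/2$ necessary rather than cosmetic.

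There is, however, a genuine gap in your argument for (4). You invoke the Kirillov character asymptotic \eqref{-tr-Oph}, which as stated in the paper carries an \emph{additive} remainder $O(\ph)$ inside the parentheses: $\mathrm{tr}(\Oph(a))=\ph^{-d}\big(\int_{\ph\CO_\pi}a\,d\om+O(\ph)\big)$. Your rescaling correctly shows that the main term $\ph^{-d}\int_{\ph\CO_\pi}a_\ph^{s,\kappa}\,d\om_{\ph\CO_\pi}=\int_{\CO_\pi}(\ph^{-2\kappa}+|\eta|^2)^{-s/2}\,d\om_{\CO_\pi}\leq C_s$ is bounded uniformly, but the error term contributes $\ph^{-d}\cdot O(\ph)=O(\ph^{1-d})$, which is \emph{unbounded} as $\ph\to 0$ whenever $d\geq 2$. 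So the displayed chain does not yield $\mathrm{tr}\leq C_s$. (Moreover, \eqref{-tr-Oph} is stated for a fixed $a\in\CS(\fgh)$, whereas $a_\ph^{s,\kappa}$ is $\ph$-dependent and only polynomially decaying, so its applicability here is not automatic even at the level of the main term.) The paper sidesteps all of this by citing \cite{NV}, Lemma 11.5 directly: this is a trace-\emph{class} bound, controlling $\mathrm{tr}|\Oph(a)|$ by weighted $L^1$-seminorms of $a$ without any asymptotic remainder, so it yields the uniform-in-$\ph$ (and in $\pi$, $\kappa$) constant cleanly. If you want to keep your approach, you would need the version of the Kirillov formula with a remainder that is relative to the size of the orbital integral, or else abandon the asymptotic and use the absolute trace bound as the paper does.
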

\begin{proof} Claims in (1) and (2) are  standard  in $\Oph$ calculus of \cite{NV} (see Section \ref{Oph-sect} below). The proof of (3) follows the lines of \cite{NV}, \S12.2, Lemma 12.1, where such a statement is proved for $\haf s=N\in\bz_{\geq 0}$ and for the symbol $\dl_s(\xi)=(1+|\xi|^2) ^{N}$ giving $\Oph(\dl_N)=\Dl_\ph^N$ with $\Dl_\ph=1-\ph^2\sum_{x\in\CBB}x^2$. Bound in (4) follows from \cite{NV}, Lemma 11.5.
	\end{proof}
	Our choice of the symbol $(r^2+|\xi|^2)^{\haf s}$  produces   Hermitian forms $H_{s,r}^\ph$ which are variants of the $L^2$-Sobolev form of index $s$ on $\pi$ (see below). In fact, one can use interpolation of Hermitian forms to extend Sobolev norms to general real index $s$ (see \cite{BR1}, Appendix B). Note that we do not use automorphic realization of  forms $H_{s,r}^\ph$ on spaces of functions on $X$ (e.g., forms $\langle\cdot,\cdot\rangle_{\pi^s}$ below are defined via the operator $\Dl$ which corresponds to an explicit  differential operator acting on functions on $X$). Our connection to the automorphic picture comes only from the Frobenius functional $I$.  

\begin{cor}\label{I-bound-cor}\ 
\begin{enumerate}
		\item  For any $s>d$, $1>\kappa>1/2$ and $r=\ph^{-\kappa}$, there exists a constant $C_s>0$ such that \begin{equation*}
			||I|^2_{H^\ph_{s,r}}\leq C_s\ . 
		\end{equation*}
			\item  Let  $(\pi,\nu)$ be an automorphic cuspidal representation $v\in V_\pi$ and $\phi_v\in C^\8(X)$ be the corresponding automorphic function. For $s$ and $r$ as above, we have \begin{equation*}
				|\phi_v(\bar e)|^2=|I(v)|^2\leq C_s\cdot \langle \Oph(b_\ph^{s,\kappa})(v),v\rangle\ .
			\end{equation*}
				\item  Assume that $\G$ is co-compact. For $s$, $r$ and $\phi_v$ as above, we have \begin{equation}\label{sup-norm-bd}
					\sup_X|\phi_v|^2\leq C_{s,X}\cdot \langle \Oph(b_\ph^{s,\kappa})(v),v\rangle\ .
				\end{equation}
	\end{enumerate}
	
	\end{cor}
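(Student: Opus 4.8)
The plan is to derive all three assertions from the quantitative Frobenius theorem \eqref{Frob-class} of \cite{BR2}, applied with $P$ the invariant Hermitian form and $Q=H_{s,r}^\ph$ the form of \eqref{b_h}--\eqref{b-symbol} (its associated norm written $N_{s,r}^\ph$, so that $\|I\|_{N_{s,r}^\ph}=\|I\|_{H_{s,r}^\ph}$). The only hypotheses to be checked are that $H_{s,r}^\ph$ is a \emph{uniformly continuous} positive definite Hermitian form and that its relative trace $tr(P\mid H_{s,r}^\ph)$ is finite and bounded independently of $\ph$ (and of $\kappa$). Once this is done, (2) and (3) follow formally from the rank-one structure of $Q_I$ and from co-compactness.

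For the relative trace: since $b_\ph^{s,\kappa}(\xi)\ge \ph^{-s}(\ph^{2(1-\kappa)})^{s/2}=\ph^{-s\kappa}\ge 1$ for $0<\ph<1$, the asymptotic positivity of $\Oph$ on non-negative symbols (Section \ref{op-calc-intro}) provides $\ph_0>0$ with $\Oph(b_\ph^{s,\kappa})\ge \haf\,\mathrm{Id}$ for $0<\ph<\ph_0$; in particular $H_{s,r}^\ph$ is positive definite and $H_{s,r}^\ph\ge \haf P$, so the bounded operator $A$ representing $P$ with respect to $Q=H_{s,r}^\ph$ is $A=\Oph(b_\ph^{s,\kappa})^{-1}$ and $tr(P\mid H_{s,r}^\ph)=tr(A)$. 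Writing $A(\ph)=\Oph(b_\ph^{s,\kappa})\circ\Oph(a_\ph^{s,\kappa})$ as in part (3) of the Proposition, one has $\Oph(b_\ph^{s,\kappa})^{-1}=\Oph(a_\ph^{s,\kappa})\circ A(\ph)^{-1}$, hence by cyclicity of the trace
\[
tr(P\mid H_{s,r}^\ph)=tr\big(\Oph(a_\ph^{s,\kappa})\,A(\ph)^{-1}\big)\le \|A(\ph)^{-1}\|_{\mathrm{op}}\cdot\|\Oph(a_\ph^{s,\kappa})\|_{\mathrm{tr}}\le C\cdot C_s,
\]
uniformly in $0<\ph<\ph_0$ and $\kappa\in(1/2,1)$, using parts (3) and (4) of the Proposition together with the trace-norm form of \cite[Lemma 11.5]{NV} (here one uses $s>d$, and that $a_\ph^{s,\kappa}\ge 0$, so the trace norm of $\Oph(a_\ph^{s,\kappa})$ agrees with its trace up to a negligible error). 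Thus $tr(P\mid H_{s,r}^\ph)\le C_s$.

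The main obstacle, and the only place genuine work is needed, is to show that $H_{s,r}^\ph$ is uniformly continuous, i.e.\ admits a continuous distortion function $d_Q(g)=\|\pi(g)\|^2_{N_{s,r}^\ph}$. This is precisely the property that \emph{fails} for forms $Q_a^\ph$ attached to compactly supported symbols, and it holds here because $b_\ph^{s,\kappa}$, although $\ph$-dependent, is a globally defined elliptic symbol in $\ph^{-s}S^s_{1-\kappa}$ comparable to all of its coadjoint translates. The plan is to invoke the covariance of the $\Oph$-calculus: for fixed $g$, the change of variables $x\mapsto \mathrm{Ad}(g)x$ in the integral defining $\Oph(b_\ph^{s,\kappa})$ shows that $\pi(g)^{-1}\Oph(b_\ph^{s,\kappa})\pi(g)$ equals $\Oph(g\cdot b_\ph^{s,\kappa})$, up to a modification of the cutoff $\chi$ which is negligible (relative to $\langle\Oph(b_\ph^{s,\kappa})v,v\rangle\ge \haf\|v\|^2$) as $\ph\to 0$, where $(g\cdot b_\ph^{s,\kappa})(\xi)=b_\ph^{s,\kappa}(\mathrm{Ad}^*(g^{-1})\xi)$. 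Since the $\ph^{2(1-\kappa)}$ term only helps, one has pointwise $(g\cdot b_\ph^{s,\kappa})(\xi)\le \max(1,\|\mathrm{Ad}^*(g^{-1})\|)^s\,b_\ph^{s,\kappa}(\xi)$, with both sides in $\ph^{-s}S^s_{1-\kappa}$; applying asymptotic positivity of $\Oph$ to the non-negative difference gives, for $0<\ph<\ph_0$,
\[
H_{s,r}^\ph(\pi(g)v,\pi(g)v)=\langle\pi(g)^{-1}\Oph(b_\ph^{s,\kappa})\pi(g)v,v\rangle\le C\,\max(1,\|\mathrm{Ad}^*(g^{-1})\|)^s\,H_{s,r}^\ph(v,v),
\]
so $d_Q(g):=C\max(1,\|\mathrm{Ad}^*(g^{-1})\|)^s$ is a continuous distortion function, with $C$ uniform in $\ph<\ph_0$. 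The delicate point is only to check that the cutoff error and the symbol-class estimates are controlled \emph{uniformly in the small parameter $\ph$}, which is routine bookkeeping within the calculus of \cite{NV}; as a by-product the constant $C(\pi,\nu,X)$ of \eqref{Frob-class}, even if one tracks its dependence on the modulus of $d_Q$, involves only this fixed polynomial and so still depends at most on $s$ (and $\pi,\nu,X$).

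With both hypotheses in hand, \eqref{Frob-class} gives $\|I\|^2_{N_{s,r}^\ph}\le C(\pi,\nu,X)\cdot tr(P\mid H_{s,r}^\ph)\le C_s$, which is (1). For (2), by the definition of the Frobenius functional, $|\phi_v(\bar e)|^2=|I(v)|^2=Q_I(v)\le \|I\|^2_{N_{s,r}^\ph}\,H_{s,r}^\ph(v,v)\le C_s\,\langle\Oph(b_\ph^{s,\kappa})v,v\rangle$. For (3), assume $\G$ co-compact and fix a compact $K\subset G$ with $\G K=G$; for $x\in X$ pick $g_x\in K$ with $x=\bar e g_x$, so that $\phi_v(x)=\nu(v)(g_x)=I(\pi(g_x)v)$. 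Applying (2) to the vector $\pi(g_x)v$ and then the distortion bound of the previous paragraph,
\[
|\phi_v(x)|^2\le C_s\,H_{s,r}^\ph(\pi(g_x)v,\pi(g_x)v)\le C_s\,d_Q(g_x)\,\langle\Oph(b_\ph^{s,\kappa})v,v\rangle,
\]
and taking the supremum over $x\in X$ and setting $C_{s,X}:=C_s\sup_{g\in K}d_Q(g)<\infty$ (finite since $d_Q$ is continuous and $K$ compact) yields \eqref{sup-norm-bd}. All constants produced this way are effective and uniform for $0<\ph<\ph_0$.
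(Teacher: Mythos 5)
The paper gives no explicit proof of this corollary; it is meant to fall out of \eqref{Frob-class} together with the preceding proposition (parts (3) and (4)) and the observation about self-similarity/uniform continuity of the forms $H^\ph_{s,r}$. Your proposal correctly assembles exactly these ingredients: (i) the relative-trace bound $tr(P\mid H^\ph_{s,r})\le\|A(\ph)^{-1}\|_{\mathrm{op}}\|\Oph(a_\ph^{s,\kappa})\|_{\mathrm{tr}}\ll C_s$ via cyclicity, and (ii) uniform continuity of $H^\ph_{s,r}$ via equivariance of $\Oph$ together with the pointwise symbol estimate $(g\cdot b^{s,\kappa}_\ph)(\xi)\le\max(1,\|\mathrm{Ad}^*(g^{-1})\|)^s\,b^{s,\kappa}_\ph(\xi)$, which is the precise sense in which $\langle\xi\rangle_r$ is ``self-similar under the coadjoint action.'' Steps (2) and (3) then reduce to (1) exactly as you write, and your derivation of (3) by translating the base point through a compact set of representatives is the intended one.

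One place to tighten the exposition: the lower bound $\Oph(b_\ph^{s,\kappa})\ge\haf\,\mathrm{Id}$ cannot be obtained by a crude sharp-G\aa rding argument applied to $b_\ph^{s,\kappa}-1\ge 0$, because the seminorms of $b_\ph^{s,\kappa}$ in the symbol class blow up like $\ph^{-s}$, so the G\aa rding error is not uniformly small. The clean way to get positivity and a uniform lower bound (and hence positive definiteness of $H^\ph_{s,r}$, which \eqref{Frob-class} requires) is the one already latent in your relative-trace paragraph: part (3) of the proposition gives two-sided invertibility $\Oph(b_\ph^{s,\kappa})^{-1}=\Oph(a_\ph^{s,\kappa})A(\ph)^{-1}$ with operator norm $O(1)$, and this combined with asymptotic self-adjointness (not positivity of the error, just boundedness relative to $\Oph(b)$ itself) shows the spectrum of $\Oph(b_\ph^{s,\kappa})$ lies in $[c,\infty)$ for some $c>0$ uniformly in $0<\ph\le\ph_0$. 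You should state this rather than invoking ``asymptotic positivity on non-negative symbols'' as a black box, since the latter, read literally, does not give a constant independent of the $\ph^{-s}$-sized seminorms. With that adjustment the proof is complete and matches the intended route.
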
 

\subsection{Symbol and operator classes}\label{Oph-sect} We copy from \cite{NV}, \S3 and \S4. An $\ph$-dependent symbol $a : \fgh\to\bc$ is a function which depends
– perhaps implicitly – upon $\ph$: $a(\xi) := a(\xi; \ph)$, and denote by $a_\ph(\xi) := a(\ph\xi) = a(\ph\xi; \ph)$
 the rescaled $\ph$-dependent function.

\begin{definition}[\cite{NV}, \S4.4]
Let $m\in\br,\ \dl\in [0; 1)$. Let $a : \fgh\to\bc$ be a smooth $\ph$-dependent function. We
write $a\in  S_\dl^m=S_\dl^m(\fgh)$
if for each multi-index $\al\in\bz_{\geq 0}^n$, $n=\dim(G)$,  there exists $C_\al>0$ so that for all $\xi\in\fgh$ and $\ph\in (0; 1]$, 
\begin{equation}\label{h-symbol-def}
	|\partial^\al a(\xi)|\leq C_\al \ph^{-\dl|\al|} \langle\xi\rangle^{m-|\al|}\ .
\end{equation}
\end{definition}
Informally, $S_\dl^m$  consists of elements which oscillate at the scale $\xi+O(\ph^\dl\langle\xi\rangle)$.

Fix a basis $\CBB := \CBB(\fg)$ of $\fg$, and set
$\Dl  := 1 - \sum_{x\in\CBB}x^2\in\fU$. 
Also denote by $\Dl$ its image $\pi(\Dl)\in \End(\pi)$. The operator $\Dl$ induces a densely-defined self-adjoint positive operator on $\pi$ with bounded inverse
and $\Dl\inv\leq 1$. For $s\in\bz$, define  (Sobolev) inner product $\CS_s=\langle\cdot,\cdot\rangle_{\pi^s}$ on $\pi^\8$ by $\langle v,u\rangle_{\pi^s}:=\langle\Dl^sv,u\rangle $
The inner product on $\pi$ induces a duality between $\pi^s$ and $\pi^{-s}$, $\pi^{\8}:=\cap\pi^s$ and $\pi^{-\8}:=\cup\pi^s$.  An operator on $\pi$ is a linear map $T:\pi^\8\to\pi^{-\8}$. For $x\in\fg$, consider the commutator $\theta_x(T):=[\pi(x),T] $. The map $x\mapsto \theta_x$ extends to an algebra morphism $\fU\to \End(\{{\rm operators\ on \ }\pi\})$
\begin{definition}[\cite{NV}, \S3.3]
	For $m\in\bz$, an operator $T$ on $\pi$ has order $\leq m$ if for each $s\in\bz$ and $u\in\fU$ the operator $	\theta_u(T)$ induces a bounded map
	\begin{equation}\label{op-class-def}
		\theta_u(T):\pi^s\to\pi^{s-m}\ .
	\end{equation}
\end{definition}
The space of operators of order $\leq m$ is denoted by $\Psi^m:=\Psi^m(\pi)$. Denote by $\Psi^{-\8}:=\cap\Psi^m$ the space of smoothing operators and by  $\Psi^{\8}:=\cup\Psi^m$ the space of finite order operators. The ${\rm Op}$ calculus provides a map ${\rm Op}_1(S^m)\subseteq \Psi^m$ for all $m\in\bz\cup\{\pm\8\}$ (\cite{NV}, \S5.6, Theorem 5.6). 

For $\ph$-scaled symbols, one needs a slight tweak of operator classes (see \cite{NV}, \S5.3). 
Let $\pi=\pi_\ph$ be an $\ph$-dependent unitary representation. Fix $\dl\in[0,1)$. Denote
by $\Psi^m_\dl$ the space of $\ph$-dependent operators $T = T(\ph)$ on $\pi$ with the property that
for each $u\in\fU$ and $s\in\bz$, there exists $C_{u,s}\geq 0$  (independent of $\ph$) so that for all $\ph\in (0,1]$,
$||\theta_u^\dl(T)||_{\pi^s\mapsto\pi^{s-m}}\leq C_{u,s}$, where $u\mapsto \theta_u^\dl$ denotes $\ph^\dl$-re-scaled map: for $u=x_1\dots x_n$, $\xi\in\fg$, $\theta_u^\dl=\ph^{n\dl}\theta_u$. The basic result on $\Oph$ association  is then the following 
\begin{theorem*}[\cite{NV},\S5.6] Fix $\dl\in[0,\haf)$. For $m\in\bz$, we have $\Oph(S^m_\dl)\subseteq \ph^{\min(0,m)}\Psi^m_\dl$ .
\end{theorem*}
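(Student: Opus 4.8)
The plan is to strip off the $\ph$-scaling by a Fourier rescaling, reducing the assertion to a ``static'' inclusion ${\rm Op}_1(S^m_\dl)\In\Psi^m_\dl$, and then to run the commutator bookkeeping that reduces this static inclusion to a Calder\'on--Vaillancourt-type $L^2$ estimate. The substitution $y=\ph x$ in the defining integral $\Oph(a)=\int_\fg\chi(\ph x)a^\vee(x)\pi(\exp(\ph x))\,dx$, together with the Fourier scaling law $\ph^{-n}a^\vee(\cdot/\ph)=(a_\ph)^\vee$ (where $n=\dim G$ and $a_\ph(\xi)=a(\ph\xi)$), yields
\[
\Oph(a)={\rm Op}_1(a_\ph)\ .
\]
Moreover $a\in S^m_\dl$ forces $\ph^{-\min(0,m)}a_\ph\in S^m_\dl$, with seminorms controlled by those of $a$ and independent of $\ph$: from $\partial^\al a_\ph(\xi)=\ph^{|\al|}(\partial^\al a)(\ph\xi)$ and the elementary bounds $\ph\langle\xi\rangle\le\langle\ph\xi\rangle\le\langle\xi\rangle$ (valid for $0<\ph\le1$) one gets $|\partial^\al a_\ph(\xi)|\le C_\al\ph^{(1-\dl)|\al|}\langle\ph\xi\rangle^{m-|\al|}\le C_\al\ph^{\min(0,m)-\dl|\al|}\langle\xi\rangle^{m-|\al|}$, the last step because $(1-\dl)|\al|+\min(0,m-|\al|)=\min(|\al|,m)-\dl|\al|\ge\min(0,m)-\dl|\al|$. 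Since $\Psi^m_\dl$ is a class of $\ph$-dependent operators intrinsic to $\pi$ and makes no reference to the quantization scale, this and the displayed identity reduce the theorem to the static inclusion ${\rm Op}_1(S^m_\dl)\In\Psi^m_\dl$.

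For the static inclusion the key input is the commutator formula. For $x\in\fg$, starting from ${\rm Op}_1(b)=\int_\fg\chi(y)b^\vee(y)\pi(\exp y)\,dy$ and using $[\pi(x),\pi(\exp y)]=\pi(\exp y)\pi\bigl((\Ad(\exp(-y))-1)x\bigr)$ with $(\Ad(\exp(-y))-1)x=-[y,x]+O(|y|^2)$, a single integration by parts in $y$ gives $\theta_x({\rm Op}_1(b))={\rm Op}_1(\mathcal D_x b)$, where $\mathcal D_x$ is a first-order differential operator on $\fgh$ whose coefficients are polynomials of degree $\le1$ (its leading part being the Hamiltonian vector field of the linear function $\xi\mapsto\xi(x)$ for the Lie--Poisson structure). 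Hence $\mathcal D_x$ maps $S^m_\dl$ into $\ph^{-\dl}S^m_\dl$ --- one $\xi$-derivative costs a factor $\ph^{-\dl}$ and one power of $\langle\xi\rangle$, while the degree-$\le1$ coefficient restores the power --- and iterating, $\theta_u({\rm Op}_1(b))={\rm Op}_1(\mathcal D_{x_1}\cdots\mathcal D_{x_n}b)$ for $u=x_1\cdots x_n\in\fU$ has symbol in $\ph^{-n\dl}S^m_\dl$. The $\ph^{-n\dl}$ loss is exactly matched by the $\ph^{n\dl}$ built into $\theta^\dl_u=\ph^{n\dl}\theta_u$, so $\theta^\dl_u({\rm Op}_1(b))={\rm Op}_1(c_u)$ with $c_u\in S^m_\dl$ and $\ph$-uniform seminorms. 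Thus ${\rm Op}_1(S^m_\dl)\In\Psi^m_\dl$ reduces to the single mapping statement: for $c\in S^m_\dl$, the operator ${\rm Op}_1(c)\colon\pi^s\to\pi^{s-m}$ is bounded uniformly in $\ph$, for every $s\in\bz$. By the usual ladder argument --- writing this as boundedness on $\pi^0=L_\pi$ of $\Dl^{(s-m)/2}{\rm Op}_1(c)\Dl^{-s/2}$, with $\Dl={\rm Op}_1(1+|\xi|^2)$, and expanding the compositions symbolically --- it reduces further to the base case $c\in S^0_\dl$.

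The main obstacle is precisely this base case: the $\ph$-uniform boundedness of ${\rm Op}_1(S^0_\dl)$ on $L_\pi$. This is the Calder\'on--Vaillancourt estimate in the present setting, and the natural route is a Cotlar--Stein almost-orthogonality argument: decompose $c=\sum_\nu c_\nu$ into pieces localized in $\xi$ to boxes of size $\asymp\ph^\dl\langle\xi\rangle$ (the oscillation scale of $c$), estimate $\|{\rm Op}_1(c_\mu)^*{\rm Op}_1(c_\nu)\|$ and $\|{\rm Op}_1(c_\mu){\rm Op}_1(c_\nu)^*\|$ by $C_N(1+\mathrm{dist}(\mu,\nu))^{-N}$ via non-stationary phase, and sum; the $\ph$-uniform seminorm bounds found above make the resulting estimate independent of $\ph$. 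It is here, and in the symbolic composition used in the ladder (where each successive term of the expansion costs a further factor $\ph^{-\dl}$ per order gained), that the hypothesis $\dl<\haf$ is used, exactly as in \cite[\S5]{NV}: it keeps all error terms of strictly negative order after the $\ph$-bookkeeping, so the ladder closes and $c\in S^0_\dl$ genuinely yields a bounded operator. Granting this classical-but-technical input, the theorem follows from the rescaling identity, the symbol computation, and the commutator formula.
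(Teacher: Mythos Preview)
The paper does not give its own proof of this statement; it is quoted verbatim as a result of Nelson--Venkatesh and cited to \cite[\S5.6]{NV}. So there is no in-paper argument to compare against. Your sketch follows the route one expects (and that \cite{NV} takes): rescale $\Oph(a)={\rm Op}_1(a_\ph)$, check that $a\in S^m_\dl$ forces $\ph^{-\min(0,m)}a_\ph\in S^m_\dl$ with $\ph$-uniform seminorms, handle the $\theta^\dl_u$ commutators by symbolic differentiation, and reduce via the $\Delta$-ladder to an $\ph$-uniform Calder\'on--Vaillancourt bound for $S^0_\dl$. Your rescaling identity and the seminorm computation for $a_\ph$ are correct as written.

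One point deserves more care. The identity $\theta_x({\rm Op}_1(b))={\rm Op}_1(\mathcal D_x b)$ with $\mathcal D_x$ a \emph{first-order} differential operator is an oversimplification. The factor $(\mathrm{Ad}(\exp(-y))-1)x=\sum_{k\ge 1}\tfrac{(-1)^k}{k!}\mathrm{ad}(y)^k x$ contributes terms of every degree $k$ in $y$; converting $y$-monomials to $\xi$-derivatives via Fourier and pushing the remaining Lie-algebra factor to the right produces a sum of operators of the shape ${\rm Op}_1(\partial^\beta b)\,\pi(z_\beta)$ (together with boundary terms from differentiating the cutoff $\chi$, which are smoothing). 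Each such term still has $\Psi$-order $\le m$ and carries the same $\ph^{-\dl}$ loss per commutator, so your bookkeeping and the conclusion are unaffected --- but the intermediate object is not literally ${\rm Op}_1$ of a single symbol. With that caveat, and granting the Calder\'on--Vaillancourt input you flag, the argument is sound and matches what \cite{NV} does.
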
 The notation $\ph^a\Psi^m_\dl$ means operators in $\Psi^m_\dl$ with norms bounded by $\ph^a$. 

\section{Proof of Theorem \ref{main-thm}} 

\subsection{} Let $(\pi,\nu)$ be a non-trivial tempered generic representation of $G$ for a {\it co-compact} lattice $\G$, and $I: V_\pi\to\bc$ the corresponding Frobenius functional. We denote by $Q_I(\cdot)=|I(\cdot)|^2$ the corresponding rank one Hermitian form on $\pi$. 

Let $\al\in C^\8_c(\fgh)$ be a fixed non-negative function with the support  $\supp(\al)\subset B_1(\fgh)$ in a unit ball. For $0<\dl<1/2$ and $0\not=\xi_0\in\CN_{reg}$, denote by 
\begin{equation}\label{a-symbol-thm-2}
	a_\ph(\xi)=a(\ph,\xi_0,\dl,\al)(\xi)=\al(\xi_0-\xi/\ph^\dl)\in C^\8_c(\fgh), \ \xi\in\fgh\ . 	\end{equation} 
We define 
\begin{equation}\label{QI-Qa-2}
	E_\ph(a_\ph):= \langle Q_I,Q_{a_\ph}^\ph\rangle \ .	
\end{equation}
Here we view $Q_I=I\otimes \bar I\in \pi^*\otimes \bar\pi^*$, $Q_{a_\ph}^\ph(\cdot,\cdot)=\langle\Oph(a_\ph)(\cdot),\cdot\rangle$, $Q_a^\ph=\sum v_i\otimes u_i\in \pi\otimes\bar\pi$ and $\langle \cdot,\cdot \rangle$ comes from the natural invariant pairing $\langle \cdot,\cdot\rangle_\pi$ on $\pi$. The pairing $	\langle Q_I,Q_{a_\ph}^\ph\rangle $ is well-defined since $\Oph(a_\ph)$ is a smoothing operator. We have 

\begin{equation}\label{QI-Opa}
	E_\ph(a_\ph)= \langle I,  \Oph(a_\ph)(I)\rangle_\pi=I ( \Oph(a_\ph)(I))\ .	
\end{equation}
The vector $w_{a_\ph}:=\Oph(a_\ph)(I)\in V_\pi$ is a smooth vector, and we denote by $w^1_{a_\ph}\in V_\pi$ the corresponding norm one vector (assuming  $w_{a_\ph}\not=0$ since otherwise there is nothing to prove).

 We denote by ${\rm a}_\ph\geq 0$ the non-negative symbol such that $({\rm a}_\ph)^2=a_\ph$, ${\rm w}_{{\rm a}_\ph}:=\Oph({\rm a}_\ph)(I)$ and by ${\rm w}^1_{{\rm a}_\ph}$ the corresponding norm one vector. Using the composition Theorem 7.4, \cite{NV}, we see that $\Oph(a_\ph)=\Oph({\rm a}_\ph)^2+R_\ph$ with $||R_\ph||_{\pi^{-d-1}\to\pi^{\8}}\ll \ph^{1-2\dl}$.  Hence $w_{a_\ph}=\Oph(a_h)(I)=\Oph({\rm a}_\ph^2)(I)=
\Oph({\rm a}_\ph)\left[\Oph({\rm a}_\ph)(I)\right]+R_\ph(I)=\Oph({\rm a}_\ph)({\rm w}_{{\rm a}_\ph})+R_\ph(I)$.
According to \eqref{Frob-class} and the bound $tr(P_\pi|\CS_s)<C_{\pi,s}$ for $s>d$ with a constant $C_{\pi,s}>0$ depending on $s,\ \pi$, we see that $||I||_{\pi^{-s}}\leq C_{\pi,s,X}$ for $s>d$ with a constant $C_{\pi,s,X}>0$ depending on $s,\ \pi$ and (compact) $X$. Hence we have the bound $||R_\ph(I)||_\pi\ll \ph^{1-2\dl}$ and since $\dl<1/2$ this term is negligible as compared to our target bound $\ph^{-d}$. 

Let $0\not=c\in\bc$ be the proportionality constant defined by ${\rm w}_{{\rm a}_\ph}= \Oph({\rm a}_\ph)(I)=c\cdot {\rm w}_{{\rm a}_\ph}^1$. From the basic relation  $I(v)=\phi_{v}(\bar e)$,  we obtain
\begin{align*}
\label{QI-Opa-2}
	c\cdot \phi_{\Oph({\rm a}_\ph)({\rm w}_{{\rm a}_\ph}^1)}(\bar e)&=&\\ \langle I,  \Oph({\rm a}_\ph)(c\cdot {\rm w}_{{\rm a}_\ph}^1)\rangle&=&\langle \Oph({\rm a}_\ph)(I),  c\cdot {\rm w}_{{\rm a}_\ph}^1\rangle=\langle  c\cdot {\rm w}_{{\rm a}_\ph}^1,  c\cdot {\rm w}_{{\rm a}_\ph}^1\rangle=c^2\cdot || {\rm w}_{{\rm a}_\ph}^1||^2=c^2\ ,
\end{align*} and finally, $\Oph({\rm a}_\ph)(I)=\phi_{\Oph({\rm a}_\ph)({\rm w}_{{\rm a}_\ph}^1)}(\bar e)\cdot {\rm w}_{{\rm a}_\ph}^1$. 
This implies the following bound
\begin{align}	E_\ph(a_\ph)=\langle I,  \Oph(a_\ph)(I)\rangle= \langle I,  \Oph({\rm a}_\ph^2)(I)\rangle=(1+O(\ph^{1-2\dl}))\cdot\langle\Oph({\rm a}_\ph)(I),  \Oph({\rm a}_\ph)(I)\rangle=\nonumber\\ (1+O(\ph^{1-2\dl}))\cdot|\phi_{\Oph({\rm a}_\ph)({\rm w}_{{\rm a}_\ph}^1)}(\bar e)|^2\leq 2\sup_X|\phi_{\Oph({\rm a}_\ph)({\rm w}_{{\rm a}_\ph}^1)}|^2\ .\end{align}

We now need to bound sup norm of an automorphic function $|\phi_{\Oph({\rm a}_\ph)({\rm w}_{{\rm a}_\ph}^1)}|^2$ better than the dyadic bound $\ph^{-d}$. The operator  $\Oph({\rm a}_\ph)$ is essentially an idempotent and hence there is no conceptual difference between vectors  ${\Oph({\rm a}_\ph)({\rm w}_{{\rm a}_\ph}^1)}$ and  $w^1_{a_\ph}=w_{a_\ph}/||w_{a_\ph}||$ with $w_{a_\ph}:=\Oph(a_\ph)(I)$. Hence we revert to dealing with the function $\phi_{{w}_{{a}_\ph}^1}$ for notational simplicity.

The sup norm of a function could be bounded by an appropriate Sobolev norm on $X$, or more appropriately in our situation by the quantitative Frobenius reciprocity Theorem \ref{Frob-class} (note that it goes beyond the usual Sobolev restriction theorem on $X$). In particular, we have from the bound \eqref{sup-norm-bd} for $s>d$,
\begin{equation}\label{sup-norm-bd-2}
	\sup_X|\phi_{w^1_{a_\ph}}|^2\leq C_{s,X}\cdot \langle \Oph(b_\ph^{s,\kappa})({w^1_{a_\ph}}),{w^1_{a_\ph}}\rangle\ .
\end{equation} The vector ${w^1_{a_\ph}}$ is micro-localized at the $\supp(a_\ph)$, and we get (as in \cite{NV}, \S8.8.2)
\begin{equation}\label{triv-ba} |\langle \Oph(b_\ph^{s,\kappa})({w^1_{a_\ph}}),{w^1_{a_\ph}}\rangle|\leq C\sup_{\xi\in\fgh}|b_\ph^{s,\kappa}(\xi)a_\ph(\xi)|\asymp |b_\ph^{s,\kappa}(\xi_0)a_\ph(\xi_0)|\asymp \ph^{-s}\ .
\end{equation}
Hence we obtain the  bound \begin{equation}\label{triv-bd}|E_\ph(a_\ph)|\ll \ph^{-d-\eps}\end{equation} for any $\eps>0$,  since $s>d$. Such a bound is slightly  {\it weaker} than the ``trivial" bound \eqref{BR-thm-eq} valid for dyadic symbols. 

In order to improve the bound \eqref{triv-bd}, we use the fact that the sup-norm on $X$ is $G$-invariant. This will allow us to move the vector $w_{a_\ph}$ ``along $\fgh$" to decrease its $H^\ph_{s,r}$-norm and hence to improve the sup-norm bound. This in turn improves the bound on the value of $\phi_{w^1_{a_\ph}}(\bar e)=I(w^1_{a_\ph})$. We note that a similar idea was employed in \cite{BR0}, \S3.4. 

\subsection{} Let $a_\ph\in S^\8_\dl$ be as above. The equivariance of $\Oph$  amounts to the following (see \cite{NV}, \S5.5).  Let $\eps>0$. For any $g\in G$ with $||Ad(g)||\ll\ph^{-1+\dl+\eps}$, we have $\Oph(g\cdot a)\equiv \pi(g)\Oph(a)\pi(g)\inv \mod \ \ph^\8\Psi^{-\8}$ for $a\in S^\8_\dl$. Hence for $g$ as above, we have \begin{equation*}\pi(g)({ w}^1_{{ a}_\ph})=\pi(g)\left[\Oph({ a}_\ph)(I)\right]=\pi(g)\Oph({a}_\ph)\pi(g)\inv(\pi(g)(I))=\Oph(g\cdot{ a}_\ph)(\pi(g)(I))+T_\ph(I)\end{equation*} with $T_\ph\in \ph^\8\Psi^{-\8}$.  For the corresponding automorphic functions, we have 
\begin{equation}\label{action} R(g)\phi_{w^1_{a_\ph}}=R(g)\phi_{\Oph({a}_\ph)(I)}=\phi_{\pi(g)\Oph({a}_\ph)(I)}=\phi_{\Oph(g\cdot{a}_\ph)(\pi(g)I)}+\phi_{T_\ph(I)}\ ;\end{equation}
here $R(g)$ it the standard right action on functions on $X$. The operator $T_\ph:\pi^{-d-1}\to\pi^\8$ is  smoothing  with the norm bounded by any power of $\ph$. Hence the $d+1$-Sobolev norm of  $T_\ph(I)$ is negligible in $\ph$ and the same holds for the sup norm of $\phi_{T_\ph(I)}$. 

We now choose $g\in G$ based on the ``geometry" of $a_\ph$ in order to decrease the ${H^\ph_{s,r}}$-norm of $\Oph(g\cdot{a}_\ph)(\pi(g)I)$, and in turn this will improve the bound on the sup norm of $\phi_{\Oph(g\cdot{a}_\ph)(\pi(g)I)}$. Note that we assumed that $X$ is compact, and hence norms $||\pi(g)I||_{H^\ph_{s,r}}$ are uniformly bounded. Consider $\xi_0\in \CN_{reg}$ around which $a_\ph$ is supported in a $h^\dl$-ball $B_\dl=B(\xi_0,h^\dl)\subseteq \fgh$. Denote by $\theta\in (0,1)$ a (small) parameter to be optimized later.  Consider an element $g_\ph=g(\xi_0,\ph,\dl,\theta)\in G$ such that 
\begin{enumerate}
	\item Element $g_\ph$ $\ph^\theta$-contracts $\xi_0$, i.e., $||Ad^*(g_\ph)\xi_0||\ll \ph^\theta||\xi_0||$,
		\item $||Ad^*(g_\ph)||\ll \ph^{-\dl+\eps}$,
	\item $||Ad(g_\ph)||\ll \ph^{-1+\dl+\eps}$.
\end{enumerate}
Existence of (many) such $g_\ph$ is easy (e.g., by assuming that $G$ semi-simple and hence $\fg\simeq\fgh$, we can  consider Cartan subgroup in an $sl_2$-triple containing the nilpotent subalgebra $\br\xi_0\in \fgh\simeq\fg$). 
 Condition (3) allows us to use equivariance of $\Oph$ and apply $g_\ph$ to $a_\ph$ (in fact, since $\dl<1/2$ condition (2) implies  (3) and it is redundant). Conditions (1) and (2) imply that the symbol ${\rm a}^\theta_\ph:=g_\ph\cdot a_\ph$ is supported in the ball $B(Ad^*(g_\ph)\xi_0, \ph^{\dl-\theta})$. Hence for $\theta\leq \dl+\eps$, the symbol ${\rm a}^\theta_\ph$ is supported in a small ball around $\xi_0^\theta:=Ad^*(g_\ph)\xi_0$ which is much closer to $0\in\fgh$ than the original point $\xi_0$. In particular, we have a much stronger bound than \eqref{triv-ba}  following from the definition  $b_\ph^{s,\kappa}(\xi)=\ph^{-s}(\ph^{2(1-\kappa)}+|\xi|^2)^{\haf s}$ in \eqref{b-symbol} with $s=d+\eps$. Namely,  for any $\eps>0$,  
 \begin{equation}\label{non-triv-ba} \sup_{\xi\in\fgh}|b_\ph^{s,\kappa}(\xi){\rm a}^\theta_\ph|\asymp |b_\ph^{s,\kappa}(\xi_0^\theta){\rm a}^\theta_\ph(\xi_0^\theta)|\asymp \ph^{-d-\eps}(\ph^{2(1-\kappa)}+\ph^{2\theta}|\xi_0|^2)^{\haf(d+\eps)}=(\star)\  .
\end{equation} Restrictions on parameters $\kappa$, $\theta$ are those that insure that we are working with symbols allowed under $\Oph$.  We have  $\haf<\kappa<1$, $0<\theta<\dl+\eps$ with $\dl=1-\kappa<\haf$. Clearly, there is a choice of parameters such that there exists $\s>0$ such that  
 \begin{equation}\label{star-bd} (\star)\ll\ph^{-d+\s}\  .
 \end{equation}
As in \eqref{sup-norm-bd-2} and \eqref{triv-ba}, this implies the bound desired
\begin{equation}\label{non-triv-bd}|E_\ph(a_\ph)|\ll \ph^{-d+\s}\ .\end{equation}
\qed
\appendix
\section{}
\subsection{Fattening/thickening the cycle}\label{fatt} Here we sketch the proof of \cite{BR3} for  Theorem \ref{BR-thm} rephrased in the $\Oph$ language of \cite{NV}. 

The basic relation between the invariant Hermitian form $P$ on $\pi$ and the Frobenius form $Q_I$ is that   $P=\int_{g\in G/\G} g\cdot Q_Idg$. This is equivalent to requiring that the automorphic realization $\nu$ is an isometry. Hence we have the following coarse bound for any $\psi\in C_c(G)$, 
\begin{equation*}
	\int_G\psi(g) g\cdot Q_I\ dg\leq C_{\psi, \G} \cdot P\ ,
\end{equation*}with $C_{\psi,\G}=\sup_{g\in \G\sm G}|\sum_{\g\in\G}\psi( g\g)|$. 

Note that a symbol (or a part of a symbol) supported in a ball near the origin  $B(0, \ph^\bt)=\left\{\xi\in\fgh\ |\ ||\xi||\leq \ph^\bt\right\}$ for any $\haf>\bt>0$, could be bounded effectively by the form $H^\ph_{s,r}$. Hence for simplicity, we assume that the symbol $a\in C_c^\8(\fgh\sm 0)$ is compactly supported away from zero, and moreover is supported ``not far" from a non-zero point in $\CN_{reg}$. Namely, assume that there are a point $\xi_0\in\CN_{reg}$, a small enough closed ball $B_{\fgh}\subset \fgh$ such that ${\xi_0+B_{\fgh}}\cap\CN\subset\CN_{reg}$ and a ball $B_G\subset G$ satisfying $\supp(a)\subset B_G\cdot(\xi_0+B_{\fgh})$.  We want to prove that $E_\ph(\bar e)=\langle Q_I, Q_a^\ph\rangle\leq C \ph^{-d}\int_{\ph\CO_\pi} a d\om_{\ph\CO_\pi}$. We choose $\haf>\dl>0$ and consider a smooth characteristic function $a_\ph$ of the ball $\xi_0+\ph^\dl B_{\fgh}$ (e.g., $a_\ph$ as in Theorem~\ref{main-thm}). Let $\psi=\psi(a,a_\ph)\in C^\8_c(B_G)$ be a function such that $a=\int_g\psi(g)g\cdot a_\ph\  dg$. We have $Q^\ph_a=\int_G\psi(g) g \cdot Q^\ph_{a_\ph}\ dg$ and hence 
	\begin{align*}
	&E_\ph(a)=\langle Q_I,Q^\ph_a\rangle=\langle Q_I, \int_G\psi(g) g \cdot Q^\ph_{a_\ph}\ dg\rangle = \int_G\psi(g) \langle Q_I,g\cdot Q^\ph_{a_\ph}\rangle dg=\\  &\langle \int_G\psi(g)g\cdot Q_Idg, Q^\ph_{a_\ph}\rangle \leq C_{\psi,\G}\cdot\langle P, Q^\ph_{a_\ph}\rangle=C_{\psi,\G}\cdot tr(\Oph(a_\ph))\asymp C_{\psi,\G}\cdot\ph^{-d}\int_{\ph\CO_\pi} a_\ph d\om_{\ph\CO_\pi}\ .
\end{align*}	
The constant $C_{\psi,\G}$ could be interpreted as a ``packing" symbol $a$ with small balls symbols $g\cdot a_\ph$. This gives the desired result.

\subsection{Repacking}\label{repack} Here we sketch a proof of Theorem \ref{NV-thm} based on ``repacking of symbols" which is possible thanks to Ratner's equidistribution theorem. 

As in the previous section, symbols (or part thereof) supported in a small ball around origin could be bounded effectively by forms $H^\ph_{s,r}$. Hence we deal with dyadic symbols supported  ``not far" from a fixed  point $0\not=\xi_0\in\CN_{reg}$ (see above). We call such a symbol  supported at a {\it finite part} of $\CN_{reg}$. Let $a\in C^\8_c(\fgh\sm 0)$ be such a symbol. Our aim is to prove that $\lim\limits_{\ph\to 0}\ph^{d}E_\ph(a)= \int_{\ph\cal{O}_\pi}a\ d\om_{\ph\cal{O}_\pi}\ $. We will prove that for any $g\in G$, 
\begin{equation}\label{a=b}
	\lim\limits_{\ph\to 0}\ph^{d}E_\ph(g\cdot a)=\lim\limits_{\ph\to 0}\ph^{d}E_\ph(a)\ .
\end{equation} Since $\int_{g\in G/\G}g\cdot Q_I dg=P$, we obtain  on the average over $\G\sm G$ that  \begin{equation*}
\int_{g\in G/\G}E(g\cdot a)dg=\langle P,Q_a^\ph\rangle=tr(\Oph(a))=\ph^{-d}\int_{\ph\cal{O}_\pi}a\ d\om_{\ph\cal{O}_\pi}+O(\ph^{-d+1})\ . \end{equation*} This implies the desired result.

To prove \eqref{a=b}, we will prove the following ``repacking" claim.
\begin{claim*} Assume that $\G$ is co-compact. Let $a,\ b\in C^\8_c(\fgh\sm 0)$ be fixed non-negative symbols as above (i.e., supported in a finite part of $\CN_{reg}$). For any $\eps>0$, there are (finite) collections of symbols $\{a_i\}$ and $\{b_j\}$, $a_i,\  b_j\in C^\8_c(\fgh\sm 0)$ also supported at a finite part of $\CN_{reg}$, and collections of elements $\{\g^a_i\},\ \{\g^b_j\}$ in $\G$ such that 
	\begin{enumerate}
		\item $a=\sum a_i+O(\eps)$ and $b=\sum b_j+ O(\eps)$ pointwise  on $\CN_{reg}$, 
		\item $a=\frac{|a|}{|b|}\sum \g^b_jb_j+O(\eps)$ and $b=\frac{|b|}{|a|}\sum \g^a_ia_i+O(\eps)$ pointwise on $\CN_{reg}$, 
	\end{enumerate}
with $|a|=\int_{\CN_{reg}} a d\om_{\CN_{reg}}$ and the same for $b$. 
\end{claim*}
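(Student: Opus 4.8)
The plan is to prove the repacking claim by a direct geometric construction on the nilcone, using equidistribution of the $\G$-orbit (i.e. Ratner's theorem for the unipotent flow whose orbit closure foliates $\CN_{\mathrm{reg}}$) to arrange that one can approximate, up to small error, any prescribed absolutely continuous density on $\CN_{\mathrm{reg}}$ by a $\G$-translated sum of small-ball symbols. First I would fix a large parameter $R>1$ and a small scale $\eta>0$, cover $\supp(a)$ and $\supp(b)$ (both in a finite part of $\CN_{\mathrm{reg}}$) by finitely many coordinate charts, and using a smooth partition of unity write $a=\sum_i a_i+O(\eta)$ and $b=\sum_j b_j+O(\eta)$ where each $a_i,b_j$ is supported in a ball $B(\zeta_i,\eta)\subset\CN_{\mathrm{reg}}$ (resp. $B(\zeta_j',\eta)$); this is (1). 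The content is in (2): I must show that the piece of mass $b_j$ sitting near $\zeta_j'$ can, after applying suitable $\g^b_j\in\G$ (acting by $\mathrm{Ad}^*$ on $\fgh$), be relocated and rescaled so that $\sum_j\g^b_j b_j$ reproduces the density of $a$ on $\CN_{\mathrm{reg}}$ up to the normalizing factor $|a|/|b|$ and an $O(\eps)$ error, and symmetrically with $a$ and $b$ interchanged.

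Here is how I would obtain (2). The key geometric input is that the coadjoint action of $G$ on $\CN_{\mathrm{reg}}$ is transitive (for quasi-split $G$ and the regular nilpotent orbit) with a measure-class-preserving action, so any point of $\CN_{\mathrm{reg}}$ can be moved to any other; but we are restricted to using elements of the \emph{lattice} $\G$. This is exactly where Ratner's theorem enters: the stabilizer $U_{\xi_0}\subset G$ of a regular nilpotent $\xi_0$ is (essentially) unipotent, and by Ratner's equidistribution theorem the $\G$-orbit of the corresponding point in $\G\backslash G$ equidistributes in its orbit closure with respect to an algebraic measure; pushing this forward to $\CN_{\mathrm{reg}}$ shows that the $\G$-translates of a fixed small ball around a fixed base point $\xi_0$ become equidistributed on $\CN_{\mathrm{reg}}$ with respect to the invariant measure $d\om_{\CN_{\mathrm{reg}}}$. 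Concretely, one first normalizes both $a$ and $b$ to have the same total mass (this is the role of the scalars $|a|/|b|$, $|b|/|a|$), then writes the target density $a/|a|$ as a finite positive combination of indicator-like bumps of small balls, and realizes each such bump as an average of $\G$-translates of a single reference bump $b_j$; collecting these translates over $j$ gives the identity in (2) up to $O(\eps)$. The errors are controlled by: $(\mathrm{i})$ the partition-of-unity/discretization error, order $\eta$; $(\mathrm{ii})$ the equidistribution error, which for a \emph{fixed} $\eta$ and fixed smooth test densities can be made $<\eps$ by the qualitative Ratner theorem (no rate needed, since $a,b$ and hence $\eta$ are fixed once $\eps$ is chosen); $(\mathrm{iii})$ a bounded-distortion estimate ensuring that translating a small ball by $\g\in\G$ distorts its shape and the smooth density controllably, which follows since only finitely many $\g$'s appear.

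With the claim in hand, deriving \eqref{a=b} is the short step already sketched in the surrounding text: one uses $Q^\ph_{g\cdot a}\equiv \pi(g)Q^\ph_a\pi(g)^{-1}$ together with $P=\int_{G/\G}g\cdot Q_I\,dg$, and the estimates $\langle Q_I, \g\cdot Q^\ph_{a}\rangle\le C_{\G}\langle P,Q^\ph_{a}\rangle$ for $\g\in\G$, to bound $\ph^d E_\ph(a)$ above and below by $\ph^d E_\ph(\sum a_i)$, hence by $\ph^d E_\ph(\frac{|a|}{|b|}\sum \g^b_j b_j)$, which telescopes against the corresponding expression with $b$; letting $\ph\to0$ and then $\eps\to0$ forces $\lim \ph^d E_\ph(g\cdot a)=\lim\ph^d E_\ph(a)$, and averaging over $\G\backslash G$ with $\int_{G/\G} g\cdot Q_I\,dg=P$ identifies the common limit with $\int_{\ph\CO_\pi}a\,d\om_{\ph\CO_\pi}$ via the Kirillov formula \eqref{-tr-Oph}.

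The main obstacle I expect is step $(2)$ of the claim — specifically, producing the $\G$-translates $\{\g^b_j\}$ that turn the reference bumps into a good approximation of the target density. The difficulty is twofold: one must apply Ratner's theorem to the right unipotent subgroup (the stabilizer of a regular nilpotent, which for quasi-split $G$ one identifies explicitly via an $\mathfrak{sl}_2$-triple) and verify that its orbit closure in $\G\backslash G$ actually projects \emph{onto} $\CN_{\mathrm{reg}}$ (or onto a neighborhood large enough to contain $\supp(a)\cup\supp(b)$); and one must keep the smooth-symbol class and the bounded distortion under control so that the resulting equalities hold \emph{pointwise on $\CN_{\mathrm{reg}}$} with honest $C^\infty_c$ symbols, not merely weakly. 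The non-effectiveness of Ratner's theorem is harmless here because $\eps$, and hence the finite data $(\eta, \{a_i\},\{b_j\},\{\g^a_i\},\{\g^b_j\})$, is fixed before $\ph\to 0$; it is precisely this feature that Theorem \ref{main-thm} circumvents.
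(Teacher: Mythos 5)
Your proposal matches the paper's approach: both reduce the repacking to an equidistribution statement for orbits of the unipotent stabilizer $S=\mathrm{Stab}_G(\xi_0)$ of a regular nilpotent in $\G\backslash G$, invoke unique ergodicity (the unique $S$-invariant measure is the $G$-invariant one, \cite{NV} Thm.~27.7) together with Ratner's uniform equidistribution, and dualize this to a lattice-point counting problem for translates of small balls on $\CN_{\mathrm{reg}}$. One small phrasing slip: you speak of "the $\G$-orbit of the corresponding point in $\G\backslash G$ equidistributing" — $\G$-orbits in $\G\backslash G$ are single points; what equidistributes is the right $S$-orbit $xS$, whose uniform convergence is what is dualized into the counting of $\g\in\G\cap BSg^{-1}b^{-1}\cap G_T$ in the paper's Counting problem paragraph — but your subsequent description of the pushforward to $\CN_{\mathrm{reg}}$ shows you have the right picture in mind.
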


The claim means that for a co-compact $\G$, we can $\G$-repack a symbol $a$ with pieces of another symbol $b$ and vice versa, and the only $\G$-invariant of a symbol near $\CN_{reg}$ is the invariant integral along $\CN_{reg}$. The claim looks very similar to the claim that the only $\G$-invariant distribution on $\CN_{reg}$ is the invariant integral. Such a claim immediately follows from Ratner's theorem. Formally however, we need a ``finite" version of $\G$-coinvariants on functions on $\CN_{reg}$ formulated in the claim above.  

The claim above easily implies that for {\it any} $\G$-invariant Hermitian form 
$Q$ with the average bounded by $P$ (in our case $Q=Q_I$), we have $\langle Q, Q^\ph_a\rangle = \langle Q, Q^\ph_b\rangle +O(\eps\ph^{-d})$. The $O$-term comes from the fact that a pointwise bounded remainder could be bounded by a dyadic symbol with a larger support. We note that as $h\to 0$ symbols are only relevant near the nilcone $\CN$ (and in fact, the repacking is only possible near $\CN$). Since collections $\{a_i\}$ and $\{b_j\}$ are finite for any given $\eps>0$, the repacking is also valid in some neighborhood (depending on $\eps$) of $\CN$ (in fact we make this effective below). This implies \eqref{a=b} and the main claim. 

The repacking claim follows from Ratner's equidistribution theorem along the standard lines of translating equidistribution into $\G$-counting problem in certain domains in $G$.  

\subsubsection{Counting problem} Relevant to the Claim, the counting problem has the following form. Let $0\not=\xi_0\in\CN_{reg}$ and denote by $S=S_{\xi_0}=Stab_G(\xi_0)$. It is well-known that $S$ is generated by unipotent elements. Let $B=B(e,r)\subset G$ be a (small) fixed symmetric ($B=B\inv$ ) ball around $e\in G$. Let ${\rm g}\in G$ be a fixed element and $\lm=\lm_{\rm g} ={\rm g}\cdot \xi_0\in \CN_{reg}$. Consider the ball $B_\lm=B\cdot \lm\subset \CN_{reg}$ and let $B_{0}=B\cdot \xi_0\subset\CN_{reg}$. For simplicity/clarity, we prove the repacking claim for sets $B_0$ and $B_\lm$ instead of symbols $a$ and $b$. 

Let $T>1$ be a parameter and denote by $G_T:=\{g\in G\ |\ ||g||\leq T\}$. We consider the set $D_T^{\rm g}=BS{\rm g}\inv B\cap G_T$ and denote by  $\G^{\rm g}_T:=\G\cap D^{\rm g}_T$. We have $\g\in \G^{\rm g}_T$ if and only if $a_\g:=\g\cdot B_\lm\cap B_0\not=\emptyset$ for $\g\in\G$ with $||\g||\leq T$.
Our collection of $\g_i\inv$'s will be  $\G^{\rm g}_T$ (for $\eps$ calculated later) and we set as $a_i=a_{\g_i}$ the corresponding collection of ``symbols".  We need to show that $\sum_i \g_i\inv\cdot a_i$ approximates  $B_\lm$ up to any $\eps>0$ as $T\to\8$. Namely, we have to count for every $\om=b{\rm g}\xi_0$ number of $\g_i\in \G^{\rm g}_T$ such that $\g_i\cdot\om\in B_0$. All such $\g_i$ are of the form $b's{\rm g}\inv b\inv$ with $b'\in B$ and $s\in S$ arbitrary. Hence we are looking for the number of elements in $\G$ with norm bounded by $T$ and in the set $BS{\rm g}\inv b\inv$. To count these elements, we can project the set $C_T:=eBS{\rm g}\inv b\inv\cap G_T$ to $\G\sm G$ and count number of preimages of $\bar e$. This could be interpreted as an equidistribution of $S$-orbits in $X$. In our situation there is the unique $S$-invariant probability measure on $X$, i.e., the $G$-invariant measure (see \cite{NV}, Theorem 27.7). According to  Ratner's theorems, in this case ergodic averages tend to the mean  uniformly (but not effectively) as a function of  initial points.  Hence the counting is also uniform in ${\rm g}$ and  $b$,  e.g., for ${\rm g}\inv b\inv=e$, the counting number is given by $C_T(B,\bar e)=\int_{S\cap G_T}\chi_B ds$ for $\chi_B$ the characteristic function of the projection of $B$ to $X$.  We have $C_T(B,\bar e)\to vol(S\cap G_T)\cdot\frac{vol(B)}{vol(X)}$ as $T\to\8$ thanks to the Ratner's equidistribution theorem. This solves the counting problem. An effective version would imply that we can shrink the set $B=B_\ph$ as $\ph\inv\asymp T^N$ for some large but finite  $N$ depending on the quality of effective equidistribution. This would imply Conjectures (1) and (2). Note that Theorem \ref{main-thm} allows one to discard a small enough set for which Ratner's theorem possibly does not hold and substitute it with a large deviation type of statement in order to prove a variant of Conjecture (1).

\subsection*{Acknowledgments} It is a great pleasure to thank Akshay Venkatesh for pointing to a problem which led me to think about small balls and the Frobenius functional. The results presented here are part a long-term joint project with Joseph Bernstein. I thank them both for enlightening  discussions on many subjects. It is a pleasure to thank Paul Nelson for discussions on the Orbit Method, Omri Sarig on equidistribution and Vladimir Hinich on coadjoint orbits. It is a pleasure to thank Peter Sarnak for sharing challenging questions, support and interest in this line of research. 
\bibliographystyle{amsalpha}

\begin{thebibliography}{AAAA}

\bibitem[B]{B} J. Bernstein, Analytic structures on representation spaces of reductive groups. Proceedings
of the International Congress of Mathematicians, Vol. II (Berlin, 1998). Doc. Math. 1998, Extra
Vol. II, 519--525.


\bibitem[BR1]{BR1} J. Bernstein, A. Reznikov, \textit{Sobolev norms of automorphic functionals and Fourier coefficients of cusp forms}, C. R. Acad. Sci. Paris S\'{e}r. I Math. \textbf{327} (1998), no. 2, 111--116.

\bibitem[BR2]{BR0} J. Bernstein, A. Reznikov, Analytic continuation of
representations, Ann.  Math., 150 (1999), 329--352.


\bibitem[BR3]{BR2} J. Bernstein, A. Reznikov, \textit{Sobolev norms of automorphic functionals}, Internat. Math. Res. Notes (IMRN) \textbf{40} (2002), 2155--2174.

\bibitem[BR4]{BR3} J. Bernstein, A. Reznikov, Estimates of
automorphic functions, Moscow Math. J. 4 (2004), no. 1, 19--37.

\bibitem[BR5]{BR4} J. Bernstein, A. Reznikov, Subconvexity bounds for triple $L$-functions and representation theory. Ann. of Math. (2) 172 (2010), no. 3, 1679--1718.

\bibitem[BHMM]{BHMM} V. Blomer, G. Harcos, P. Maga, D. Mili\'cevi\'c, 
Beyond the spherical sup-norm problem, J. Math. Pures Appl. (9) 168 (2022), 1--64.

\bibitem[GF]{GF} I. Gelfand, S. Fomin, Geodesic flows on manifolds of negative curvature, Uspekhi Mat. Nauk, 7, no. 1 (1952), 118--137.


\bibitem[GGPS]{GGPS} I. Gelfand, M. Graev, I. Piatetski-Shapiro, Representation Theory and Automorphic Forms, Saunders, Philadelphia, 1966.



\bibitem[GPS]{GPS} 
I. Gelfand, I. Piatetski-Shapiro,  Representation theory and theory of automorphic functions, Uspekhi Mat. Nauk, 14, no. 2 (1959), 171--194.


\bibitem[IS]{IS} H. Iwaniec, P. Sarnak, 
$L^\8$ norms of eigenfunctions of arithmetic surfaces, Ann. of Math. (2) 141(1995), no.2, 301--320.

\bibitem[K]{K} A. A. Kirillov, Merits and demerits of the orbit method. Bull. Amer. Math. Soc. (N.S.), 36(4):433--488,
1999.

\bibitem[NV]{NV} 
{P. Nelson, } {A. Venkatesh,}
	 \textit{The orbit method and analysis of automorphic forms},
{Acta Math.},
{226}, {(2021)},{1}, {1--209}.
	
\bibitem[Ol]{Ol} G. Ol'shanski, On the duality theorem of Frobenius,
Funct. Anal. and App., 3, no. 4(1969), 295--302.

\bibitem[Ot]{Ot} J.-P. Otal, \textit{Sur les fonctions propres du laplacien du disque hyperbolique}, C. R. Acad. Sci. Paris S\'er. I Math. \textbf{327} (1998), no. 2, 161--166.

\bibitem[R]{R} A. Reznikov,
Laplace-Beltrami operator on a Riemann surface and equidistribution of measures. Comm. Math. Phys. 222 (2001), no. 2, 249--267.

\bibitem[RS]{RS} Z. Rudnick, P. Sarnak, 
The behaviour of eigenstates of arithmetic hyperbolic manifolds, Comm. Math. Phys. 161(1994), no.1, 195--213.

\bibitem[S]{S} W. Schmid, \textit{Automorphic distributions for $SL(2,\mathbb R)$}, Conf\'erence Mosh\'e Flato 1999, Vol. I (Dijon), 345--387, Math. Phys. Stud., 21, Kluwer Acad. Publ., Dordrecht, 2000.

\bibitem[Z1]{Z} S. Zelditch, 
Uniform distribution of eigenfunctions on compact hyperbolic surfaces. Duke Math. J. 55 (1987), no. 4, 919--941.

\bibitem[Z2]{Z1} S. Zelditch,
On a ``quantum chaos'' theorem of R. Schrader and M. Taylor. J. Funct. Anal. 109 (1992), no. 1, 1--21.


\end{thebibliography}

\end{document}